\documentclass[11pt]{amsart}
\usepackage{amsmath}
\usepackage{amsthm}
\usepackage{amssymb}

\newtheorem{theorem}{Theorem}

\newtheorem{proposition}[theorem]{Proposition}
\newtheorem{lemma}[theorem]{Lemma}
\newtheorem{corollary}[theorem]{Corollary}

\theoremstyle{remark}
\newtheorem*{remark}{Remark}
\newtheorem*{acknowledgements}{Acknowledgements}
\begin{document}

\title[Eigensystem of an $L^2$-perturbed harmonic oscillator]
{Eigensystem of an $L^2$-perturbed harmonic oscillator is an unconditional basis}

\author{James Adduci}

\address{Department of Mathematics,
The Ohio State University,
 231 West 18th Ave,
Columbus, OH 43210, USA}

\email{adducij@math.ohio-state.edu}

\author{Boris Mityagin}

\address{Department of Mathematics,
The Ohio State University,
 231 West 18th Ave,
Columbus, OH 43210, USA}

\email{mityagin.1@osu.edu}

\subjclass[2000]{47E05, 34L40, 34L10}

%%%%%%%%%%%%%%%%%%%%%%%%%%%%%%%%%%%%%%%%%%%%%%%%%%%%%%
%ABSTRACT
%%%%%%%%%%%%%%%%%%%%%%%%%%%%%%%%%%%%%%%%%%%%%%%%%%%%%%
\begin{abstract}
We prove the following. For any complex 
valued 
$L^p$-function $b(x)$, 
$2 \leq  p < \infty$ or $L^\infty$-function with the norm $\| b | L^{\infty}\| < 1$, 
the spectrum of a perturbed harmonic oscillator 
operator $L = -d^2/dx^2 + x^2 + b(x)$ in 
$L^2(\mathbb{R}^1)$ is discrete and eventually simple. 
Its SEAF (system of eigen- and 
associated functions) is an unconditional basis in $L^2(\mathbb{R})$.
\end{abstract}

\keywords{Harmonic oscillator, Hermite functions, 
discrete Hilbert transform, unconditional basis}

\maketitle

%%%%%%%%%%%%%%%%%%%%%%%%%%%%%%%%%%%%%%%%%%%%%%%%%%%%%%
%INTRODUCTION
%%%%%%%%%%%%%%%%%%%%%%%%%%%%%%%%%%%%%%%%%%%%%%%%%%%%%%
\section{Introduction} \label{INTRO}

In this paper we condsider the perturbed operator 
\begin{align} 
\label{1A}
L=L^0 + B
\end{align}
where
\begin{align*} 
%\label{2A}
L^0 := -\frac{d^2}{dx^2} + x^2 
\end{align*}
is a harmonic oscillator
and $B$ is multiplication by a complex-valued function, 
\begin{align*} 
%\label{3A}
B \phi(x) = b(x) \phi(x)
\end{align*}
or maybe a more general linear operator.
The spectrum of $L^0$ is the set 
\begin{align*} 
\text{Sp}(L^0) = \{\lambda^0_k = 2k+1: k \in \mathbb{Z}_+=0,1,2,\ldots \}.
\end{align*}
The corresponding eigenfunctions are the Hermite functions $h_k(x), k \in \mathbb{Z}_+$ 
(see for example \cite[Ch. 6, Sect 34]{dirac}), \cite[Ch. 5, Sect. 4]{levitan}; $\|h_k\|_2=1$.
Define the Banach space 
\begin{align} 
\label{5A}
V = \{ \phi \in L^2_{\text{loc}}(\mathbb{R}): \| \phi h_k \|_2 < \infty \quad \forall k \in \mathbb{Z}_+ 
 \quad \text{and} \quad \lim_{k \rightarrow \infty} \| \phi h_k \|_2 =0 \}, 
\end{align}
with the norm
\begin{align}
\label{6A}
\quad \| \phi \| &= \sup \{ \|\phi h_k \|_2 \}.
\end{align}
Our proofs in this paper use in an essential way the condition  $b \in V$. 
In Section \ref{1E} we will use known estimates for Hermite functions 
to prove that the following spaces are embedded in $V$:
\begin{align}
\label{Oa}
L(p,\alpha)&= \{ \phi: (1+|x|^2)^{\alpha/2} |\phi(x)|  \in L^p(\mathbb{R}) \}, \,\, \alpha/2 + t(p) \leq 0  
\end{align}
where $t$ is defined in (\ref{0305_4}) and
\begin{align}
\label{Ob}
L^\infty_0(\mathbb{R})  &= 
 \{ \phi \in  L^\infty(\mathbb{R}): \text{ess sup}_{|s| \geq t } |\phi(s)| \rightarrow  0 \quad \text{as} \quad t \rightarrow \infty \}.
\end{align} 
We can claim as a special case
of (\ref{Oa}) that the spaces $L^p(\mathbb{R}), \quad 2\leq p < \infty$ are embedded in $V$  - see Lemma \ref{0305_5} and Proposition 
\ref{0312_1}.

We will now state our main results. 
Their proofs are given in Sections \ref{1C} and \ref{1D}.
Put 
\begin{align*} 
%\label{7A}
\Pi(a,b) := \{x+iy \in \mathbb{C}: |x| < a, |y| < b \},
\end{align*} 
\begin{align*} 
%\label{8A}
D(p,r) := \{z \in \mathbb{C}: |z-p| < r \}
\end{align*}
and 
\begin{align}
\label{9A}
S(n) := \Pi(2n,Y) \cup \left( \bigcup_{k=n}^{\infty} D(\lambda^0_k, 1/16) \right)
\end{align}
with 
\begin{align}
\label{10A}
Y= 8  \left( \|b \| +2 \pi  \|b\|^2 \right). 
\end{align}
For $z \notin \text{Sp}(L)$ (resp. $z \notin \text{Sp}(L^0)$) put 
$R(z):= (z-L)^{-1}$ (resp. $R^0(z):= (z-L^0)^{-1}$).
%%%%%%%%%STATE PROPOSITION%%%%%%%%%%%%%%%%% 
\begin{proposition} 
\label{11A}
Suppose $b \in V$. The spectrum of $L$ is discrete and 
there exists $N_* \in \mathbb{N}$ such that
\begin{align}
\label{12A}
\text{Sp}(L) \subset S:= S(N_*)
\end{align}
and for each 
$n \geq N_*$ the disk $D(\lambda^0_n, 1/16)$ contains exactly 
one  eigenvalue $\lambda_n$ of $L$.
\end{proposition}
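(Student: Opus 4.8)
The plan is to reduce the spectral problem to the invertibility of a perturbation of the identity and then to estimate the relevant operator norm using the decay built into $V$. Since $b \in V$ forces $\|b h_j\|_2 \to 0$ with $\|b h_j\|_2 \le \|b\|$ for all $j$, the multiplication operator $B$ is relatively bounded with respect to $L^0$, so $L$ is a well-defined closed operator and, for every $z \notin \mathrm{Sp}(L^0)$, we may factor
\begin{align*}
z - L = (z - L^0)\bigl(I - R^0(z) B\bigr).
\end{align*}
Writing $K(z) := R^0(z) B$ and expanding in the Hermite basis gives the matrix entries
\begin{align*}
\langle K(z) h_k, h_j\rangle = \frac{b_{jk}}{z - \lambda^0_j}, \qquad b_{jk} := \langle b h_k, h_j\rangle,
\end{align*}
and, since $\sum_k |b_{jk}|^2 = \|b h_j\|_2^2$ by Parseval, the Hilbert--Schmidt norm is
\begin{align*}
\|K(z)\|_{\mathrm{HS}}^2 = \sum_{j} \frac{\|b h_j\|_2^2}{|z - \lambda^0_j|^2},
\end{align*}
so $K(z)$ is Hilbert--Schmidt, hence compact, for every $z \notin \mathrm{Sp}(L^0)$.

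The core of the argument is to choose $N_*$ so that $\|K(z)\| < 1$ for all $z \notin S(N_*)$; then $I - K(z)$ is invertible by a Neumann series, the factorization gives $z \notin \mathrm{Sp}(L)$, and \eqref{12A} follows. On the complement of $S(N_*)$ the point $z$ is bounded away from $\mathrm{Sp}(L^0)$, so I would split the estimate into a near-diagonal block (indices $j$ with $\lambda^0_j$ close to $z$) and an off-diagonal block. The near block is controlled by the tail smallness $\|b h_j\|_2 \to 0$, which is exactly where $b \in V$ and the choice of $N_*$ enter. The off-diagonal block, whose kernel behaves like $(z-\lambda^0_j)^{-1}$ against $(b_{jk})$, is where the boundedness of the discrete Hilbert transform upgrades a Hilbert--Schmidt-type bound to a genuine operator-norm bound; this produces the quadratic term $\|b\|^2$ and the constant $2\pi$ in the definition \eqref{10A} of $Y$, and fixing the half-height $Y$ guarantees the estimate also holds on the horizontal parts of $\partial S(N_*)$. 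I expect this operator-norm estimate to be the main obstacle: a naive Hilbert--Schmidt bound on the boundary circles $|z - \lambda^0_n| = 1/16$ is dominated by the single term $256\,\|b h_n\|_2^2$, made small by taking $N_*$ large, but controlling the finitely many low modes (where $\|b h_j\|_2$ need not be small) uniformly in $n$ requires the sharper transform estimate together with the width $Y$.

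Discreteness of $\mathrm{Sp}(L)$ then follows at once: fixing any $z_0 \notin S(N_*)$, the resolvent $R(z_0) = R^0(z_0)(I - K(z_0))^{-1}$ is the product of the compact operator $R^0(z_0)$ with a bounded operator, hence compact, so $L$ has compact resolvent and its spectrum is discrete.

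Finally, to show each disk $D(\lambda^0_n, 1/16)$ with $n \ge N_*$ contains exactly one eigenvalue I would compare Riesz projections
\begin{align*}
P_n := \frac{1}{2\pi i}\oint_{|z - \lambda^0_n| = 1/16} R(z)\, dz, \qquad P_n^0 := \frac{1}{2\pi i}\oint_{|z - \lambda^0_n| = 1/16} R^0(z)\, dz,
\end{align*}
where $P_n^0$ has rank one because $\lambda^0_n$ is a simple eigenvalue of $L^0$. Using $R(z) = R^0(z)\sum_{m\ge 0} K(z)^m$ and the bound $\|R^0(z)\| \le 16$ on the contour (the nearest point of $\mathrm{Sp}(L^0)$ being $\lambda^0_n$ at distance $1/16$), I would estimate
\begin{align*}
\|P_n - P_n^0\| \le \frac{1}{16}\,\sup_{|z - \lambda^0_n| = 1/16}\|R^0(z)\|\,\frac{\|K(z)\|}{1 - \|K(z)\|},
\end{align*}
which is $< 1$ for $n \ge N_*$ by the same operator-norm estimate (tightening the threshold in the choice of $N_*$ if needed). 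Two projections at distance less than $1$ have equal rank, so $\mathrm{rank}\,P_n = 1$; hence $D(\lambda^0_n, 1/16)$ contains a single eigenvalue $\lambda_n$ of algebraic multiplicity one, which completes the proof.
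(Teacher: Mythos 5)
Your reduction to invertibility of $I-K(z)$, the Hilbert--Schmidt identity $\|K(z)\|_{\mathrm{HS}}^2=\sum_j\|bh_j\|_2^2|z-\lambda_j^0|^{-2}$, the compact-resolvent argument for discreteness, and the rank comparison of Riesz projections are all sound in outline; the latter two are in fact genuinely different from the paper, which instead runs a homotopy $L(t)=L^0+tB$ and quotes Kato's lemma on continuity of finite-rank projections. The quantity you must control is exactly the one the paper bounds (it obtains it by Cauchy--Schwarz applied to $BR^0(z)$, which has the same Hilbert--Schmidt norm). But there is a genuine gap at precisely the step you flag as ``the main obstacle,'' and neither tool you propose can close it. For $z$ on a circle $|z-\lambda_n^0|=1/16$ with $n\ge N_*$, or on the vertical sides of the rectangle, the low modes contribute $\sum_{j<J}\|bh_j\|_2^2|z-\lambda_j^0|^{-2}\le\|b\|^2\sum_{j<J}|z-\lambda_j^0|^{-2}$, and: (i) the discrete Hilbert transform is irrelevant here --- it bounds operators with kernel $1/(k-n)$ acting on sequences indexed by $n$, i.e.\ it needs a sum over $n$, whereas here $z$ is fixed; accordingly the paper never uses it in this proposition (it enters only in the Bari--Markus estimate for Theorem \ref{22A}); and (ii) the half-height $Y$ is useless on the circles, where $|\mathrm{Im}\,z|\le 1/16$, so no choice of $Y$ in (\ref{10A}) helps there. (The $\|b\|^2$ and the $2\pi$ in (\ref{10A}) come from the elementary bound $\sum_k\left((\mathrm{Re}\,z-\lambda_k^0)^2+Y^2\right)^{-1}\le Y^{-2}(2+\pi Y/2)$ on the horizontal sides, not from any transform estimate.)

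The missing idea is that $N_*$ must be taken large in terms of $\|b\|$ and $J$, not merely past the point where the tail $\|bh_n\|_2$ is small: with $J$ chosen so that $\|bh_k\|_2\le 1/68$ for $k\ge J$ (the paper's (\ref{2C})), one takes $N_*$ so large that $\|b\|^2 J\,(2(N_*-J)-1)^{-2}\le 1/16$ (the paper's (\ref{3C})--(\ref{4C})). Then every $z\notin S(N_*)$ with $|\mathrm{Re}\,z|>2N_*$ --- in particular every point of the circles --- lies at horizontal distance at least $2(N_*-J)-1$ from each $\lambda_j^0$ with $j<J$, which kills the low-mode sum; the modes $j\ge J$ contribute at most $(1/68)^2\cdot 17^2=1/16$ since $\sum_{j\ge J}|z-\lambda_j^0|^{-2}\le 16^2+1+2\sum_{m\ge1}(2m)^{-2}\le 17^2$; and the horizontal sides of $\partial\Pi(2N_*,Y)$ are handled by the choice of $Y$. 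This gives $\|K(z)\|\le 1/2$ for all $z\notin S(N_*)$, which is exactly what both your Neumann series and your bound $\|P_n-P_n^0\|\le\|K\|/(1-\|K\|)<1$ require. With that choice of $N_*$ inserted, your argument goes through (one small correction: with $K=R^0B$ the expansion reads $R=\sum_{m\ge 0}K^mR^0$, not $R^0\sum_{m\ge 0}K^m$; equivalently work with $BR^0$ as the paper does).
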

Put
\begin{align} 
\label{13A}
S_*   &:= \frac{1}{2 \pi i} \int_{\partial \Pi(2N_*,Y)} R(z) dz,    \\
\label{14A}
P_k^0 &:=\frac{1}{2 \pi i} \int_{\partial D(\lambda^0_k,1)} R^0(z) dz, \quad \text{for $k \in \mathbb{Z}_+$} 
\end{align}
and 
\begin{align} 
\label{15A}
P_k := \frac{1}{2 \pi i} \int_{\partial D(\lambda^0_k,1)} R(z) dz, \quad \text{for $k\geq N_*$}.   
\end{align}
By Proposition \ref{11A} all the integrals in (\ref{13A}-\ref{15A}) are well defined.
Of course, $P_k^0 f = \left< f, h_k \right> h_k \quad \forall k$ and $\dim P_k^0 = 1$.
\begin{proposition}
\label{11AA}
The constant $N_*$ from Proposition \ref{11A} can be chosen in such a way that 
\begin{align}
\label{16A}
\text{dim}(P_k) &= \text{dim}(P_k^0)=1 \quad \forall k\geq N_*
\end{align}
and
\begin{align}
\label{18A}
\quad \text{dim}(S_*)&=\text{dim}(S_*^0) = N_* \quad 
\text{where} 
\quad S^0_* = \sum_{0}^{N_*-1} P_j^0.
\end{align}
Also, 
\begin{align} 
\label{19A}
\|R(z)\| \leq 32 \quad \forall z \notin S \in (\ref{12A}), 
\end{align}
\begin{align} 
\label{20A}
\|P_n \| \leq 32, 
\end{align}
and 
\begin{align} 
\label{21A}
\left\| \frac{1}{2\pi i} 
\int_{|z- \lambda_n^0| = 1} \frac{R(z)}{z - \lambda_n} dz \right\| 
\leq  35
\end{align}
whenever 
$n \geq N_*$.
\end{proposition}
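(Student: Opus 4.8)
The plan is to prove Proposition \ref{11AA} by exploiting the fact that, on the relevant contours, the perturbation $B$ is small compared to the resolvent $R^0(z)$ of the unperturbed operator, so that all the perturbed projections are controlled by their unperturbed counterparts via the second resolvent identity. The dimension equalities (\ref{16A}) and (\ref{18A}) follow from a standard homotopy (Kato) argument: since $\dim P$ for a Riesz projection is an integer depending continuously on the contour data, and since $R(z) = R^0(z)(I - BR^0(z))^{-1}$ is norm-continuous in the coupling parameter whenever $\|BR^0(z)\| < 1$ on the contour, the projections $P_k$ and $P_k^0$ (resp. $S_*$ and $S_*^0$) are similar, hence have equal rank. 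I would first record the key analytic input, presumably established in the earlier sections cited in the statement of Proposition \ref{11A}: a uniform bound of the form $\|BR^0(z)\| \leq 1/2$ (or some constant $<1$) for all $z$ on $\partial D(\lambda_n^0,1)$ with $n \geq N_*$ and on $\partial \Pi(2N_*,Y)$, obtained by choosing $N_*$ large enough using $b \in V$ and the decay $\lim_k \|b h_k\|_2 = 0$.

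Granting such a bound, the resolvent estimate (\ref{19A}) is immediate: on the contours bounding the complement of $S$ one has $\|R^0(z)\| \leq$ (distance to $\mathrm{Sp}(L^0))^{-1}$, and the geometry of $S(n)$ in (\ref{9A}) — strips of half-width $1/16$ removed around each eigenvalue and the box $\Pi(2N_*,Y)$ removed — guarantees $\|R^0(z)\|$ is bounded by a fixed constant (roughly $16$, since the nearest eigenvalue is at distance $\geq 1/16$, but with the spacing of the $\lambda_k^0$ one gets a clean bound). Then
\begin{align*}
\|R(z)\| = \|R^0(z)(I - BR^0(z))^{-1}\| \leq \frac{\|R^0(z)\|}{1 - \|BR^0(z)\|} \leq \frac{16}{1 - 1/2} = 32,
\end{align*}
which is exactly (\ref{19A}). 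The projection bound (\ref{20A}) follows by integrating this over $\partial D(\lambda_n^0,1)$: the contour has length $2\pi$, so $\|P_n\| \leq \frac{1}{2\pi}\cdot 2\pi \cdot 32 = 32$. For (\ref{21A}) I would insert the extra factor $(z-\lambda_n)^{-1}$; since $\lambda_n \in D(\lambda_n^0,1/16)$ lies well inside the contour $|z-\lambda_n^0|=1$, on that contour $|z-\lambda_n| \geq 1 - 1/16 = 15/16$, so the bound becomes $\frac{1}{2\pi}\cdot 2\pi \cdot 32 \cdot \frac{16}{15} = 32 \cdot \frac{16}{15} \approx 34.1 \leq 35$, yielding the claimed constant.

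For the dimension statements I would argue as follows. Introduce the family $L_t = L^0 + tB$ for $t \in [0,1]$; for each fixed $z$ on one of the contours, $\|tBR^0(z)\| \leq \|BR^0(z)\| < 1$, so $R_t(z) = (z - L_t)^{-1}$ exists and is norm-continuous in $t$ on the whole contour. Hence the Riesz projection $P_k(t) = \frac{1}{2\pi i}\int_{\partial D(\lambda_k^0,1)} R_t(z)\,dz$ depends continuously on $t$; since $\dim P_k(t)$ is a nonnegative integer and a continuous integer-valued function is constant, $\dim P_k = \dim P_k(1) = \dim P_k(0) = \dim P_k^0 = 1$, giving (\ref{16A}). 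The identical argument applied to the large contour $\partial \Pi(2N_*,Y)$ gives $\dim S_* = \dim S_*^0$, and $\dim S_*^0 = \sum_{j=0}^{N_*-1}\dim P_j^0 = N_*$ since $\Pi(2N_*,Y)$ encloses exactly the first $N_*$ unperturbed eigenvalues $\lambda_0^0,\dots,\lambda_{N_*-1}^0$ and nothing else. I expect the main obstacle to be not the homotopy argument itself but verifying the uniform estimate $\|BR^0(z)\| < 1$ on the box contour $\partial \Pi(2N_*,Y)$: away from the real axis this is easy, but on the top and bottom edges (and the part of the real axis inside the box between consecutive eigenvalues) one must combine the spectral decay of $b$ encoded in $b \in V$ with the explicit Hermite-function estimates to control $\|BR^0(z)\|$, and it is precisely this control that forces the particular choice of $Y$ in (\ref{10A}) and of $N_*$.
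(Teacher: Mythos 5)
Your proposal is correct and follows essentially the same route as the paper: the bound $\|BR^0(z)\|\leq 1/2$ for $z\notin S$ (established in the paper's combined proof of Propositions \ref{11A} and \ref{11AA}), the Neumann-series estimate $\|R(z)\|\leq 16/(1-1/2)=32$, contour integration over $|z-\lambda_n^0|=1$ with $|z-\lambda_n|\geq 15/16$ for (\ref{20A}) and (\ref{21A}), and the homotopy $L(t)=L^0+tB$ with norm-continuity of the Riesz projections and constancy of integer rank (Kato, Ch.\ 1, Lemma 4.10) for the dimension equalities (\ref{16A}) and (\ref{18A}). No gaps worth flagging; the one step you defer — the uniform bound on $\|BR^0(z)\|$ along the box and disk contours — is precisely the content the paper carries out via the splitting into the sums $S_1$ and $S_2$ under the choices (\ref{2C})--(\ref{4C}).
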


Let us notice that for us Propositions \ref{11A} and \ref{11AA} have a limited purpose; they
give an accurate construction of spectral projections and the system
of eigenfunctions and associated functions (SEAF) of the operator $L \in (\ref{1A})$
so we can talk about spectral decompositions (\ref{23A})
in our main theorem -- Theorem \ref{22A}.

More deliberate analysis would give  asymptotics of $L$'s eigenvalues $(\lambda_k)$.
Such asymptotics - at least for real-valued $b(x)$ - could be found in \cite{akhmerova}
(see references there as well). Another group of questions - inverse problems  - for real
valued $b(x)$ such that $b'(x)$ and $xb(x)$ are in $L^2$ as well is considered in a 
series of papers \cite{chelkak}-\cite{chelkak4}.

%%%%%%%%%%%%%%%STATE THEOREM%%%%%%%%%%%%%%%%%%%%%%%
\begin{theorem}
\label{22A}
Suppose $b \in V$; the operator (\ref{1A}) 
generates the spectral decompositions
\begin{align} 
\label{23A}
f= S_*f + \sum_{k\geq N_*} P_kf \quad 
\text{for all} \,\,\, f \in L^2(\mathbb{R})
\end{align}
where $S_*, P_k$ are defined by (\ref{13A}, \ref{15A}) 
and $N_*$ is from Proposition \ref{11A}. 
These series converge unconditionally.
\end{theorem}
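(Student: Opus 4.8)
The plan is to establish that the spectral decomposition (\ref{23A}) converges unconditionally by comparing the perturbed spectral projections $P_k$ with the unperturbed ones $P_k^0$, for which the corresponding expansion $f = \sum_k \langle f, h_k\rangle h_k$ is the orthonormal Hermite expansion and is trivially unconditional. The key quantitative input is already packaged in Proposition \ref{11AA}: the uniform resolvent bound $\|R(z)\| \leq 32$ on $\partial S$ and the uniform projection bounds $\|P_n\| \leq 32$. First I would prove that the family $\{S_*, P_{N_*}, P_{N_*+1}, \dots\}$ is a sequence of mutually disjoint projections whose sum converges to the identity. Disjointness ($P_jP_k = \delta_{jk}P_k$ and $S_*P_k = 0$) follows from the standard Riesz functional calculus, since the contours $\partial D(\lambda^0_k,1)$ and $\partial \Pi(2N_*,Y)$ are disjoint and $R(z)$ is analytic between them; this lets us write $R(z)R(w) = (R(w)-R(z))/(z-w)$ (the resolvent identity) and integrate.

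The heart of the argument is to control the difference $P_k - P_k^0$ in a summable way. I would write, via the second resolvent identity $R(z) = R^0(z) + R^0(z)\,B\,R(z)$ (or its symmetrized version), the representation
\begin{align*}
P_k - P_k^0 = \frac{1}{2\pi i}\int_{\partial D(\lambda^0_k,1)} \bigl(R(z) - R^0(z)\bigr)\,dz
= \frac{1}{2\pi i}\int_{\partial D(\lambda^0_k,1)} R^0(z)\,B\,R(z)\,dz.
\end{align*}
The goal is to show $\sum_{k \geq N_*} \|(P_k - P_k^0) f\|^2 < \infty$ with appropriate control, and more precisely to establish a Bessel-type / Riesz-basis inequality. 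Since $\{h_k\}$ is already an orthonormal basis, the standard route is to show that the perturbed system $\{\psi_k\}$ spanning the ranges of $P_k$ is \emph{quadratically close} to $\{h_k\}$, or at least that the map $f \mapsto (\langle f, \phi_k\rangle)_k$ into the biorthogonal coefficients is bounded with bounded inverse. Here the hypothesis $b \in V$, i.e. $\|b\,h_k\|_2 \to 0$, is exactly what forces the off-diagonal interaction $\|B R^0(z)\|$ restricted near $\lambda^0_k$ to decay as $k \to \infty$, so that $\|P_k - P_k^0\| \to 0$ and in fact the perturbation is small enough in the relevant operator-ideal sense.

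The main obstacle I anticipate is upgrading the termwise smallness $\|P_k - P_k^0\| \to 0$ to genuine \emph{unconditional} convergence, which requires a uniform bound on all finite partial sums over arbitrary subsets of indices, not merely convergence of the full series. Concretely, one must show there is a constant $C$ with
\begin{align*}
\Bigl\| S_* f + \sum_{k \in F} P_k f \Bigr\| \leq C\,\|f\|
\end{align*}
uniformly over all finite $F \subset \{k \geq N_*\}$ and all $f$; by a duality/square-function argument this reduces to a Bessel bound of the form $\sum_k \|P_k f\|^2 \leq C^2 \|f\|^2$ together with its adjoint version. The delicate point is that the individual bound $\|P_k\| \leq 32$ is not summable, so one cannot argue naively; instead I expect the proof to invoke the discrete Hilbert transform (flagged in the keywords) to handle the cross terms $\sum_{j \neq k}$ arising when one expands $\langle P_k f, P_k f\rangle$ through the resolvent identity, with the kernel $1/(\lambda^0_j - \lambda^0_k) = 1/(2(j-k))$ producing exactly a discrete Hilbert transform whose $\ell^2$-boundedness salvages the estimate. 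Assembling the Bessel bound for $\{P_k\}$ and for the biorthogonal family, and then applying the Bari/Köthe criterion for unconditional bases (a system biorthogonal to a Riesz basis, with both Bessel bounds finite, is itself a Riesz basis), would complete the proof.
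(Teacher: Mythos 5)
Your opening moves coincide with the paper's own (disjointness of the Riesz projections, the contour--integral representation of $P_k-P_k^0$ via $R^0BR$, a discrete Hilbert transform to control the sums over $k$), but there is a genuine gap at the decisive step: nothing in your argument shows that the series sums to $f$. Disjointness together with the two Bessel bounds does give uniform control of $\bigl\| S_*f+\sum_{k\in F}P_kf \bigr\|$ over arbitrary finite $F$, hence unconditional convergence of the series --- but only to the oblique projection of $f$ onto the closed span of the ranges of $S_*$ and the $P_k$. That this limit equals $f$, i.e.\ completeness of the perturbed spectral subspaces, is precisely the content of (\ref{23A}). The ``Bari/K\"othe criterion'' you appeal to is misstated: Bari's theorem takes completeness of the system \emph{and} of its biorthogonal system as hypotheses, so it cannot produce completeness for you, and your plan is circular exactly where the theorem has substance. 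The paper closes this loop with Kato's similarity lemma (Lemma \ref{2K}, \cite[Ch. 5, Lemma 4.17a]{katobook}): its hypothesis is not finiteness but \emph{strict smallness}, $\sum_{k\ge N}\|P_k^0(P_k-P_k^0)f\|_2^2\le \tfrac12\|f\|_2^2$ as in (\ref{26aa}), together with the dimension equalities of Proposition \ref{11AA} used to lump the low modes into the single block (\ref{26b}); the conclusion is a bounded invertible $W$ with $P_k=W^{-1}P_k^0W$, and completeness and unconditional convergence to $f$ are then transported simultaneously from the Hermite decomposition.

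The requirement of a constant smaller than $1$ (rather than your ``$<\infty$'') exposes the second, related gap: the plain $\ell^2$-boundedness of the discrete Hilbert transform, which is all you invoke, cannot deliver smallness, because the operator bound does not improve when the output is restricted to indices $n\ge N$ while the input $\xi_k=f_k\,b\,h_k$ still runs over all $k$. Moreover, since $b\in V$ only guarantees $\|bh_k\|_2\to 0$ at an arbitrarily slow rate, the operator-norm quadratic closeness $\sum_k\|P_k-P_k^0\|^2<\infty$ suggested by your ``operator-ideal'' remark can genuinely fail; only vector-wise (square-function) bounds are available. The paper's fix is the weighted machinery of Section \ref{1B} and the Appendix: choose a weight $W_\psi(k)\uparrow\infty$ subordinate to $\psi(k)=[\sup_{j\ge k}\|bh_j\|_2]^{-1}$ as in (\ref{0308_1}), (\ref{0303_3}), prove that the \emph{perturbed} transform $G_\tau$ of (\ref{24H}) --- perturbed because the relevant denominators $\lambda_k^0-\lambda_n$ involve the true eigenvalues, not $\lambda_n^0$ --- is bounded on $\ell^2(W_\psi)$ (Lemma \ref{25H}, Corollary \ref{0304_1}) and on its vector-valued analogue (Lemma \ref{34H}); then the tail beginning at $N$ is controlled by a negative power of $W_\psi(N)$ as in (\ref{26a}) and tends to $0$. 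The other half of the deviation, coming from the identity $P_n-P_n^0=P_nB\Psi_n^0+\Psi_nBP_n^0$, is estimated termwise by (\ref{21A}) and $\sup_{n\ge N}\|bh_n\|_2$. A head/tail splitting of $b$ could substitute for the weights, but you propose neither; as written, neither the smallness nor, consequently, the identity (\ref{23A}) is within reach of your argument.
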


Equation (\ref{23A}) could be written as
\begin{align} 
\label{24A}
f= S_*f + \sum_{k\geq N_*} \left<f,\psi_k \right> \varphi_k \quad 
\text{for all} \quad f \in L^2(\mathbb{R})
\end{align}
since $P_k$ are $1$-dimensional projections for 
$k  \geq N_*$. In 
(\ref{3C})
we give conditions for 
$N_*$ which guarantee 
(\ref{16A}).
$S_*$ is an $N_*$-dimensional projection and 
$E_*= \text{Image}(S_*)$ is an invariant subspace for $L$,
but we cannot say more about the structure of 
$L|E_*$. It is likely that $L|E_*$ has Jordan subspaces,
and $\mu \in \text{Sp}(L|E_*)$  
could have any algebraic multiplicity 
$m \geq 2$ but of course its 
geometric multiplicity $ \leq 2$ 
(really, $ \leq 1$ as an elementary 
analysis of the Wronskian shows). 
It would be interesting to get 
an analog of V. Tkachenko's results for 
\begin{align}
\label{0319_1}
M&=M_0 + V \\
\notag M_0 &= -d^2/dx^2,\quad  Vf = v(x)f(x), \,\,\, v \in  L^2(I)
\end{align}
on a finite interval $I=[0,\pi].$ 
In \cite[Proposition 1,2]{tkachenko}  it 
is shown that for every finite set 
$K = \{k_1, \ldots, k_t\}$ 
of pairwise distinct points in $\mathbb{C}$ and 
every set of $Q(K) = \{ q_1, \ldots, q_t\}$ 
of positive integers there exists a 
Sturm-Liouville operator (\ref{0319_1}) 
for which $\{ k_1, \ldots, k_t \}$ 
are points of the Dirichlet spectrum 
(or of the periodic spectrum, 
or of the anti-periodic spectrum)
with algebraic multiplicities equal 
to respective numbers from $Q(K)$.

Many questions about properties of SEAF of operators 
$L \in (\ref{1A})$ with $b(x) = u(x) + iv(x)$,
$u(x)=u(-x)$, $v(-x)=-v(-x)$ have been 
raised in the context of the PT-operator theory
(see \cite{bender2}, \cite{znojil}, \cite{bender},  \cite{mostafazadeh}, \cite{caliceti} ). 
In \cite{albeverio} it is shown (Section 6 and Theorem 5.8)
that if $u(x)=0$ and $|v(x)|\leq M < 2/\pi$ 
then $L$ is similar to a self-adjoint 
operator with discrete spectrum.
It happens this is true even with $M <1$.

In Section \ref{NEWNEWONE1} we consider bounded potentials $b(x) \in L^\infty$
or, more generally, perturbations in (\ref{1A}) with $B$ being a bounded operator in $L^2(\mathbb{R})$. 
V. Katsnelson's approach \cite{katsnelson}-\cite{khrushchev} to analysis of SEAF of dissipative operators leads 
to claims (see Theorem \ref{NEWNEWONE2}) that such a perturbation is `` good", i.e., an 
analog of Theorem \ref{22A} holds if $\|B \| < 1$. The constant $1$ 
is sharp as the  special (counter) examples in Section \ref{NEWNEWONE1} show.

In the multi-dimensional case see M. Agranovich's surveys 
\cite{agranovich1}, \cite[Ch. 5]{agranovich}  
on basis properties of eigensystems of weak perturbations 
of self-adjoint elliptic pseudodifferential operators 
on closed manifolds. In particular
the results of \cite[Sect. 6.2]{agranovich1}.

%%%%%%%%%%%%%%%%%%%%%%%%%%%%%%%%%%%%%%%%%%%%%%%%%%%%%%%%%%%%%%%%%%%%%%
%TECHNICAL PRELIMINARIES
%%%%%%%%%%%%%%%%%%%%%%%%%%%%%%%%%%%%%%%%%%%%%%%%%%%%%%%%%%%%%%%%%%%%%%

\section{Technical preliminaries: the discrete Hilbert transform} \label{1B}

\subsection{} \label{1H}
Let $G$ be the discrete Hilbert transform
\begin{align}\label{2H}
(G \xi)_n = \sum_{\substack{k=0\\ k\neq n}}^{\infty} \frac{\xi_k}{k-n}, \quad \text{for} \quad \xi \in \ell^2(\mathbb{Z}_+).
\end{align}
\emph{The operator $G$ is a bounded mapping from  $\ell_2$ to $\ell_2$} 
- see for example \cite[Sect. 8.12, statement 294]{hlp}, and \cite[Ch. 4, Thm. 9.18]{zygmund}. 

Given a positive weight sequence $\{W(k) \}_{k \geq 0}$ define 
\begin{align} \label{0810_1} 
\ell^2(W) = \{ \xi: \sum_{0}^{\infty} |\xi_k|^2 W(k) < \infty \}
\end{align}
and denote $w(k) = W(k)^{-1}$. We will use this convention throughout the paper.
\emph{If $W(k) = (k+1)^{\alpha}$ with $\alpha < 1$ then 
$G$ is a bounded mapping from $\ell^2(W)$ into itself}. That is,
\begin{align} \label{0303_1}
\sum_{n=0}^{\infty} (n+1)^{\alpha} \left| \sum_{k\neq n} \frac{\xi_k}{k-n}\right|^2
\leq C \sum_{n=0}^{\infty} (n+1)^{\alpha} |\xi_n|^2.
\end{align}
Furthermore, \emph{given any weight sequence $\psi(k) \rightarrow \infty$
there exists another 
weight sequence} 
\begin{align} \label{0303_3}
W_{\psi}(k) \uparrow \infty \quad \text{with} \quad  
W_{\psi}(k) \leq  \psi(k) 
\end{align}
(so that $\ell^2(\psi) \subset \ell^2(W_{\psi})$)
\emph{such that 
$G$ is a bounded mapping from $\ell^2(W_{\psi})$ into itself,
that is,} 
\begin{align} \label{0303_2}
\sum_{n=0}^{\infty} W_{\psi}(n) \left| \sum_{k\neq n} \frac{\xi_k}{k-n}\right|^2
\leq C \sum_{n=0}^{\infty} W_\psi(n) |\xi_n|^2.
\end{align}
For a proof of these facts we refer to Appendix, Corollary \ref{0304_1}.
\subsection{} \label{23HA}
%%%%%%%%%%%%%%%%%%%%%%%%%%%%%%%%%%%%%%%%%%%%
Define a perturbed Hilbert transform $G_\tau$ by
\begin{align} 
\label{24H}
(G_\tau \xi)_n = 
\sum_{\substack{k=0 \\k\neq n}}^{\infty} 
\frac{\xi_k}{k+\tau_k - n}.
\end{align}
An analog of inequalities (\ref{0303_1}),(\ref{0303_2}) still holds for small $\tau = (\tau_k)_{k=0}^{\infty}$. 
\begin{lemma} \label{25H}
Suppose $W$ is a positive sequence and the following hold: 
\\ (a) \quad  $G \in (\ref{2H})$ is a bounded map from $\ell^2(W)$ to $\ell^2(W)$,  
\\(b) \quad $\tau$ is a sequence such that $|\tau_k| \leq  1/16$,
\\
(c)
\begin{align}
\label{25HA}
\sum_{n=0}^{\infty} \frac{r(n)}{(1+n)^2} < \infty \quad \text{where} \quad r(n) = \sup \left\{ W(i+n)w(i): i\geq -n \right\}.
\end{align}
Then $G_{\tau}$ is a bounded map from $\ell^2(W)$ to $\ell^2(W)$.
\end{lemma}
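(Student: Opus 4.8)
The plan is to write $G_\tau = G + (G_\tau - G)$ and to prove that the correction $G_\tau - G$ is a bounded operator on $\ell^2(W)$; since $G$ itself is bounded on $\ell^2(W)$ by hypothesis (a), the boundedness of $G_\tau$ follows at once. First I would compute the matrix of the correction by a partial-fraction identity. For $k \neq n$,
\begin{align*}
\frac{1}{k+\tau_k-n} - \frac{1}{k-n} = \frac{-\tau_k}{(k+\tau_k-n)(k-n)},
\end{align*}
so that
\begin{align*}
\bigl((G_\tau - G)\xi\bigr)_n = -\sum_{k \neq n} \frac{\tau_k\,\xi_k}{(k+\tau_k-n)(k-n)}.
\end{align*}

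The point of hypothesis (b) is to turn the perturbed denominator into clean quadratic decay. Since $|k-n| \geq 1$ for $k \neq n$ and $|\tau_k| \leq 1/16$, one has $|k+\tau_k-n| \geq |k-n| - 1/16 \geq \tfrac{15}{16}|k-n|$, whence the entries of $G_\tau - G$ satisfy
\begin{align*}
\left| \frac{\tau_k}{(k+\tau_k-n)(k-n)} \right| \leq \frac{1}{15\,(k-n)^2}.
\end{align*}
Because the majorizing entries are non-negative, a standard domination argument (applying the triangle inequality inside the $\ell^2(W)$-norm) reduces the problem to showing that the symmetric kernel $T$ with entries $T_{nk} = (k-n)^{-2}$ for $k \neq n$ (and $T_{nn}=0$) is bounded on $\ell^2(W)$; then $\|G_\tau - G\| \leq \tfrac{1}{15}\|T\|$.

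The heart of the matter is this weighted boundedness of $T$, and it is where hypothesis (c) enters. I would conjugate by the weight, setting $\eta_k = W(k)^{1/2}\xi_k$, so that boundedness of $T$ on $\ell^2(W)$ is equivalent to boundedness on $\ell^2$ of the matrix $\widetilde T$ with entries $\widetilde T_{nk} = W(n)^{1/2} w(k)^{1/2} (k-n)^{-2}$, and then apply Schur's test with weight $p_k = W(k)^{1/2}$. The ``row'' sum collapses to a universal constant,
\begin{align*}
\sum_{k \neq n} \widetilde T_{nk}\,p_k = W(n)^{1/2} \sum_{k \neq n} \frac{1}{(k-n)^2} \leq \frac{\pi^2}{3}\,p_n,
\end{align*}
since $w(k)^{1/2}W(k)^{1/2}=1$. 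The ``column'' sum is the one that must be controlled by (c): it equals $w(k)^{1/2}\sum_{n\neq k} W(n)(k-n)^{-2}$, so I must check that $\sum_{n\neq k} W(n)(k-n)^{-2} \leq B\,W(k)$ uniformly in $k$. Writing $n = k+m$, the factor $W(k+m)/W(k)$ is, for $m \geq 1$, exactly an upward ratio $W(i+m)w(i)$ (with $i=k$) and is therefore $\leq r(m)$, while the downward terms $m \leq -1$ are harmless for the increasing weights $W \uparrow \infty$ of interest, contributing at most $\sum_{m\leq -1} m^{-2} = \pi^2/6$. Hence the column sum is at most $\bigl(\tfrac{\pi^2}{6} + C\sum_{m\geq 1} r(m)\,m^{-2}\bigr)W(k)$, and the series converges precisely because of (\ref{25HA}). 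Schur's test then yields $\|\widetilde T\| \leq \sqrt{AB} < \infty$, which completes the argument.

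I expect the main obstacle to be exactly this column estimate: showing that the weighted tail $\sum_n W(n)(k-n)^{-2}$ is comparable to $W(k)$ uniformly in $k$. This is the precise content of (\ref{25HA}) once one sees, through the substitution $n = k+m$, that the weight ratios which appear are the quantities $r(m)$ and that the quadratic decay $(k-n)^{-2}$ furnishes the extra factor $m^{-2}$ needed for summability. The symmetry $T_{nk}=T_{kn}$ together with the cancellation $w(k)^{1/2}W(k)^{1/2}=1$ is what makes the complementary ``row'' direction cost nothing.
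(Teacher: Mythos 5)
Your proposal is correct in substance and follows the same overall strategy as the paper's proof: write $G_\tau = G + (G_\tau - G)$, use hypothesis (b) and the partial-fraction identity to dominate the entries of the correction by $C(k-n)^{-2}$, and then show that any matrix dominated entrywise by the kernel $(k-n)^{-2}$ is bounded on $\ell^2(W)$ by means of hypothesis (c). The only real divergence is in this last step. The paper decomposes the dominating operator $A$ over its shifted diagonals, $A=\sum_{t\neq 0}A^t$, bounds each diagonal's $\ell^2(W)$-norm by $t^{-2}$ times a supremum of weight ratios, and sums the resulting series; you instead conjugate by $\mathrm{diag}\bigl(W^{1/2}\bigr)$ and run Schur's test with test sequence $p_k=W(k)^{1/2}$. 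These two devices are equivalent in strength here: your Schur column sum $\sum_{m}W(k+m)w(k)\,m^{-2}$ is exactly the quantity the paper sums over diagonals, and both consume (\ref{25HA}) in the same way. Your version yields a clean explicit bound $\sqrt{AB}$, while the paper's diagonal sum is marginally more elementary; the choice is a matter of taste.

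One caveat deserves attention: your handling of the downward terms ($m\leq -1$) invokes monotonicity, ``the increasing weights $W\uparrow\infty$ of interest,'' which is not among the hypotheses of the lemma ($W$ is only assumed positive), and (\ref{25HA}) as written controls only the upward ratios $W(i+n)w(i)$ with $n\geq 0$. So, strictly speaking, your argument proves the lemma for nondecreasing $W$, or under a two-sided version of (c). You should know, however, that the paper's own proof has precisely the same looseness: its series $\sum_{t\neq 0}t^{-2}r(t)$ runs over negative $t$ as well, which (\ref{25HA}) does not literally cover, and the issue is invisible there only because in every application (the increasing weights $W_\psi$ constructed in the Appendix) the downward ratios are at most $1$. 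So this is a shared imprecision rather than a gap you introduced, but in a final write-up you should state explicitly either the monotonicity assumption or the two-sided form of condition (c) that your column estimate actually needs.
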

\begin{proof}
With the boundedness of $G$ , it suffices  to show that the difference
$G-G_{\tau}$
is a bounded map from $\ell^2(W)$ to $\ell^2(W)$.
The matrix entries satisfy inequalities
\begin{align*}
%\label{26H}
|(G-G_{\tau})_{k,j}| = \left| \frac{1}{k-j} - \frac{1}{k-j+\tau_k} \right| \leq \frac{1}{(k-j)^2}, \,\,\,\, k \neq j,
\end{align*}
so it suffices to show that any operator $A$ with matrix entries $A_{k,j}$,
\begin{align*}
%\label{27H}
A_{j,j}=0; \quad |A_{k,j}| \leq  \frac{1}{(k-j)^2}, \quad k \neq j
\end{align*}
is a bounded map from $\ell^2(W)$ to $\ell^2(W)$.
Indeed, decompose $A$ over its diagonals
\begin{align*}
%\label{28H}
A  = \sum_{\substack{t= -\infty\\ t \neq 0}}^\infty A^t \quad \text{with} 
%\label{29H}
\quad A^t_{i,i+j} = \delta(t,j) A_{i,i+t}, \quad  
\forall i \in \mathbb{Z}_{+}, \quad j \geq -i .
\end{align*}
So
$\| A^t \|_{2,W} \leq 
\max_{i \in \mathbb{Z}_+} \left( |A^t_{i,i+t}|
\left( |W(i+t)||w(i)| \right) \right)  \leq    2 t^{-2} r(t).$
Hence, by (\ref{25HA})
\begin{align*}
%\label{31H}
\|A \|_{2,W} \leq 2\sum_{\substack{t=-\infty \\ t\neq 0}}^\infty t^{-2 } r(t) < \infty
\end{align*}
and the lemma follows.
\end{proof}

%%%%%%%%%%%%%%%%%%%%%%%%%%%%%%%%%%%%
\subsection{}
Define the space  
\begin{align*}
%\label{32H}
\ell^{2}(W, L^2(\mathbb{R})) &= \left\{ ( \xi_k(x) )_{k=0}^\infty: 
\xi_k(x) \in L^2(\mathbb{R}) \quad \forall k \in \mathbb{Z}_+ \,\, \text{and} \,\, 
\{ \|\xi_k(x)\|_2 \}_{k=0}^{\infty} \in \ell^2(W) \right\} 
\end{align*}
of $L^2(\mathbb{R})$-valued sequences
with the inner product
$\left< \xi, \eta \right> = \sum_{k=0}^{\infty} W(k) \left< \xi_k(x), \eta_k(x) \right>.$
\begin{lemma} \label{34H} 
Suppose $G_{\tau}$ is a bounded map from $\ell^2(W)$ into itself. The perturbed Hilbert transform $\widetilde{G}_\tau$ defined 
in the space of $L^2(\mathbb{R})$-valued sequences by
\begin{align*}
%\label{35H}
(\widetilde{G}_\tau (\xi_k(x)))_n = \sum_{\substack{j=0 \\ j\neq n}}^{\infty} \frac{\xi_k(x)}{k-n + \tau_k}
\end{align*}
is a bounded operator from $\ell^2(W, L^2(\mathbb{R}))$ into itself
with 
\begin{align} \label{36H} 
\|\widetilde{G}_{\tau} \|_{W} \leq  \|G_\tau \|_{W}.
\end{align}
\end{lemma}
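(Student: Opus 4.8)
The plan is to exploit the tensor-product structure of $\widetilde{G}_\tau$: it acts as the fixed scalar matrix $\left( \frac{1}{k-n+\tau_k} \right)$ in the sequence index while acting as the identity on the $L^2(\mathbb{R})$ factor, so morally $\widetilde{G}_\tau = G_\tau \otimes I_{L^2(\mathbb{R})}$. Since $L^2(\mathbb{R})$ is a separable Hilbert space, I would fix an orthonormal basis $\{e_m\}_{m=0}^\infty$ of $L^2(\mathbb{R})$ and write each component $\xi_k(x) = \sum_m c_{k,m} e_m(x)$ with $c_{k,m} = \langle \xi_k, e_m \rangle$. Because the coefficients $\frac{1}{k-n+\tau_k}$ do not depend on $x$, applying $\widetilde{G}_\tau$ commutes with the expansion, giving for each $m$ a purely scalar action by $G_\tau$:
\begin{align*}
(\widetilde{G}_\tau \xi)_n = \sum_m \left( \sum_{k\neq n} \frac{c_{k,m}}{k-n+\tau_k} \right) e_m = \sum_m (G_\tau c_{\cdot,m})_n \, e_m.
\end{align*}
Thus the vector-valued problem decouples into countably many copies of the scalar problem, one per basis coordinate.

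With this reduction in hand, the norm estimate is a twofold application of Parseval together with an interchange of nonnegative sums. By orthonormality of $\{e_m\}$,
\begin{align*}
\|\widetilde{G}_\tau \xi\|_W^2 = \sum_n W(n) \|(\widetilde{G}_\tau \xi)_n\|_2^2 = \sum_n W(n) \sum_m |(G_\tau c_{\cdot,m})_n|^2 = \sum_m \|G_\tau c_{\cdot,m}\|_{\ell^2(W)}^2,
\end{align*}
where the last equality uses Tonelli to swap the (nonnegative) sums over $n$ and $m$. Applying the assumed boundedness of $G_\tau$ on $\ell^2(W)$ coordinate by coordinate and swapping sums once more,
\begin{align*}
\sum_m \|G_\tau c_{\cdot,m}\|_{\ell^2(W)}^2 \leq \|G_\tau\|_W^2 \sum_m \sum_k W(k) |c_{k,m}|^2 = \|G_\tau\|_W^2 \sum_k W(k) \|\xi_k\|_2^2 = \|G_\tau\|_W^2 \|\xi\|_W^2.
\end{align*}
Taking square roots yields (\ref{36H}).

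The only genuine obstacle is rigor rather than ideas: I must ensure that the series defining $(\widetilde{G}_\tau \xi)_n$ converges in $L^2(\mathbb{R})$ and that the orthonormal expansion may be interchanged with the summation over $k$. Here the chain of equalities above, read for a single index $n$, does double duty: the finiteness of $\sum_m |(G_\tau c_{\cdot,m})_n|^2 = \|(\widetilde{G}_\tau \xi)_n\|_2^2$ (finite because $(c_{\cdot,m}) \in \ell^2(W)$ for each $m$, with $\sum_m \|c_{\cdot,m}\|^2_{\ell^2(W)} = \|\xi\|_W^2 < \infty$) certifies that the partial sums form a Cauchy sequence in $L^2(\mathbb{R})$, so the limit exists and equals $\sum_m (G_\tau c_{\cdot,m})_n e_m$. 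All interchanges are legitimate since every term in every sum is nonnegative, so Tonelli applies throughout. I note in passing that taking $\xi_k(x) = c_k g(x)$ for a fixed unit vector $g \in L^2(\mathbb{R})$ recovers the scalar operator isometrically, so in fact equality holds in (\ref{36H}); only the stated inequality is needed below.
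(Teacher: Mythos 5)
Your proof is correct and follows essentially the same route as the paper: expand each component $\xi_k$ in an orthonormal basis of $L^2(\mathbb{R})$ (the paper uses the Hermite functions $h_m$ where you use a generic basis $e_m$), reduce to countably many scalar copies of $G_\tau$ via Parseval and Tonelli, and apply the scalar bound coordinate by coordinate. Your added care about convergence of the defining series and the remark that equality holds in (\ref{36H}) are fine refinements but do not change the argument.
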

\begin{proof}
Suppose $\xi =(\xi_j(x))_{j=0}^{\infty} \in \ell^2(W, L^2(\mathbb{R}))$ with
$
\xi_j(x) = \sum_{k=0}^{\infty} \xi_j^{(k)} h_k(x) \in L^2(\mathbb{R}).
$
We have
\begin{align*}
%\label{38H}
\| \widetilde{G}_\tau \xi \|_W^2 &= \sum_{n=0}^{\infty} \| (G_\tau \xi)_n \|_{L^2}^2 W(n) = 
                       \sum_{n=0}^{\infty} 
		       \left\| \sum_{\substack{j=0 \\ j \neq n}}^\infty 
		       \frac{\sum_{k=0}^{\infty} \xi_j^{(k)} h_k(x)}{j-n+\tau_j} \right\|_{L^2} W(n) \\
		       &=\sum_{n=0}^{\infty}  \sum_{k=0}^{\infty} 
		       \left| \sum_{\substack{j=0 \\ j\neq n}}^{\infty} \frac{\xi_{j}^{(k)}}{j-n+\tau_j} \right|^2 W(n)
		       =\sum_{k=0}^{\infty} \sum_{n=0}^{\infty} |(G_\tau \xi^{(k)})_n |^2 W(n)  \\ 
		       &=\sum_{k=0}^{\infty}   \| G_{\tau} \xi^{(k)} \|_W^2 \leq \|G_\tau\|_W^2 \sum_{k=0}^{\infty} \| \xi^{(k)} \|_W^2 = 
		        \|G_\tau \|_W^2 \| \xi \|_W^2.
\end{align*}
\end{proof}
Define the ellipsoid in $\ell_2(L^2(\mathbb{R}))$:
\begin{align*}
%\label{39H}
\widetilde{E}(W) := \left\{ \xi \in \ell^2(W, L^2(\mathbb{R})): \sum_{k=0}^{\infty} \left(\|\xi_k(x)\|_2^2 W(n) \right) \leq 1 \right\}.
\end{align*}
Under the conditions of Lemma \ref{25H} we have  
\begin{align} \label{40H} 
\widetilde{G}_{\tau}( \widetilde{E}(W) ) \subset C(W)  \widetilde{E}(W).
\end{align}
\begin{remark}
In \cite{mityagin2}, \cite{mityagin1} analysis of the spectra of $1$D periodic Dirac operators used in an essential 
way the discrete Hilbert transform to prove localization of $\text{Sp}(L)$ in small discs around $n \in \mathbb{Z}$ for
$|n|$ large enough. 
\end{remark}

%%%%%%%%%%%%%%%%%%%%%%%%%%%%%%%%%%%%%%%%%%%%%%%%%%%%%%%%%%%%%%%%%%%%%%
%PROOF OF PROPOSITION
%%%%%%%%%%%%%%%%%%%%%%%%%%%%%%%%%%%%%%%%%%%%%%%%%%%%%%%%%%%%%%%%%%%%%%
\section{Proof of Propositions \ref{11A} and \ref{11AA}} \label{1C}

\begin{proof} 

Because $b \in V$ 
we may choose 
$J \in \mathbb{Z}_+$ 
with 
\begin{align}
\label{2C}
\| h_k b \|_2 \leq \frac{1}{68} \quad \text{whenever $k \geq J$}.
\end{align} 
Recall $\|b \| = \sup \{ \|h_k b \|_2 \}.$ Choose $N_* \in \mathbb{N}$ with
\begin{align} 
\label{3C}
N_* \geq  \frac{(2J+4 \|b\| \sqrt{J} + 1)}{2}  
\end{align}
so 
\begin{align} 
\label{4C}
 \frac{\|b\|^2 J}{(2(N_*-J) - 1)^2}  \leq   \frac{1}{16}.
\end{align}
Fix $z \notin S$, with $S=S(N_*)$ from (\ref{12A}). 
We will show $z \notin \text{Sp}(L)$.  
It is enough to show
$\|BR^0(z)\| \leq 1/2$ 
since then
\begin{align} 
\label{5C}
R(z) = R^{0}(z)(I-BR^0(z))^{-1}.
\end{align}
Let $f = \sum f_j h_j \in L^2(\mathbb{R})$
with $\| f \|_2 = 1$. 
We have, by (\ref{2C}) and Cauchy's inequality,
\begin{align*}
%\label{6C}
\|BR^{0}(z) f \|_2^2 & =  \left\| 
	\sum_{k=0}^{\infty} \frac{f_kb h_k}{z-\lambda^0_k}  
	\right\|_2^2\leq   \left( \sum_{k=0}^{\infty} 
	|f_k| \frac{\|bh_k \|_2}{|z-\lambda^0_k|} \right)^2  \\
%\label{7C}
	&  \leq 
		 \sup_{j < J} \{ \|h_j b \|_2^2 \} S_1 + 
		 \sup_{j \geq J} \{ \|h_j b \|_2^2 \} S_2 
\end{align*}
where
\begin{align*}
%\label{8C}
S_1 =\sum_{k=0}^{J-1} \left|z-\lambda^0_k \right|^{-2} \quad  
\text{and} \,\,\,\,
S_2 =\sum_{k=J}^{\infty} \left|z-\lambda^0_k \right|^{-2}.
\end{align*}
\\

If $| \text{Re}(z) | < 2N_*$ then 
$\text{Im}(z) \geq Y$. Thus,
\begin{align*}
%\label{9C}
S_1 &\leq \sum_{k=0}^{J-1} 
\left[ (\text{Re}(z)-\lambda^0_k)^2+Y^2 \right]^{-1} 
\leq Y^{-2}  \sum_{k \in \mathbb{Z}} 
\left[ 
\left(\frac{\text{Re}(z)-(2k+1)}{Y} \right)^2+1  
\right]^{-1}  \\
%\label{10C}
&\leq Y^{-2} \left(
2 + \int_{-\infty}^\infty 
\frac{1}{(2 x /Y)^2+1} dx \right) \leq   
Y^{-2}  \left(2+ \frac{Y \pi}{2}
\right).
\end{align*}
So (\ref{10A}) implies $\sup_{j < J} \{ \|h_j b \|_2^2 \} S_1 \leq 1/16$.
\\

Now suppose $| \text{Re}(z) | >2N_*$. We have
\begin{align*}
%label{11C}
S_1 &= \sum_{k=0}^{J-1} 
\left[ (\text{Re}(z)-\lambda^0_k)^2+\text{Im}(z)^2 \right]^{-1} \leq 
\left( \frac{J}{(2(N_*-J)-1)^2} \right). 
\end{align*}
So (\ref{4C}) implies $\sup_{j < J} \{ \|h_j b \|_2^2 \} S_1 \leq 1/16$.
\vspace{.5cm}

Finally, because 
\begin{align*}
%\label{12C}
S_2 & = \sum_{j=J}^{\infty} 
\frac{1}{|z-\lambda_j^0|^2} \leq 
\left( 16^2 +1+  2 \sum_{j=1}^{\infty} 
\frac{1}{(2j)^2} \right) \leq 17^2,  
\end{align*}
the condition (\ref{2C}) implies $\sup_{j \geq J} \{ \|h_j b \|_2^2 \} S_2 \leq 1/16$.

By proving
\begin{align}\label{13C}
\|BR^0(z)\| \leq 1/2, \quad \text{for  $z \notin S$}
\end{align}
we have shown $\text{Sp}(L) \subset S$. So we have proven (\ref{12A}).

Consider the family
\begin{align*}
%\label{14C}
L(t) = L^0 + tB, \quad 0\leq t \leq 1.
\end{align*}
For $\xi \notin S$ we have  
\begin{align*}
%\label{15C}
R(L(t),\xi)=R^0(\xi) ( I - tBR^0(\xi))^{-1}  
\end{align*}
so (\ref{13C}) implies that $ R(L(t),\xi) $ 
is well-defined and depends continuously on $t \in [0,1].$
\\
Thus
\begin{align*}
%\label{16C}
S_*(t)   &:= \frac{1}{2 \pi i} 
\int_{\partial \Pi(2N_*,Y)} R(L(t),z) dz    \\
\text{and} \quad
%label{17C}
P_k(t) &:= \frac{1}{2 \pi i} 
\int_{\partial D(\lambda^0_k,1)} R(L(t),z) dz, 
\quad \text{for $k\geq N_*$}   
\end{align*}
also depend continuously on $t \in [0,1]$. 
Since $\text{Sp}(L(0))$ contains only simple eigenvalues, it 
follows from Lemma $4.10$ in Chapter 1 of \cite{katobook} that
\begin{align*}
%\label{18C}
\text{dim}(S_*(t)) &= \text{dim}(S_*(0)) = N_* \\ 
%\label{19C}
\text{and} \quad \text{dim}(P_k(t)) &= 
\text{dim}(P_k(0)) = 1 \quad \text{for $k\geq N_*$}
\end{align*}
whenever $t \in [0,1]$.
\\
So, $\text{dim}(S_*) = \text{dim}(S_*(1)) = N_*, 
\quad \text{dim}(P_k)=
\text{dim}(P_k(1))=1, \quad k\geq N_*$ 
therefore the spectrum of $L =L(1)$ is discrete 
and contains exactly one (simple) eigenvalue in each 
$D(\lambda_k^0, 1/16)$ for each $k \geq N_*$.
Because $z \notin S$ implies 
$\|R^0(z)\| \leq 16$, (\ref{5C}) and (\ref{13C}) imply 
 (\ref{19A}), (\ref{20A}) and (\ref{21A}). 
\end{proof}

%%%%%%%%%%%%%%%%%%%%%%%%%%%%%%%%%%%%%%%%%%%%%%%%%%%%%%%%%%%%%%%%%%%%%%
%PROOF OF THEOREM
%%%%%%%%%%%%%%%%%%%%%%%%%%%%%%%%%%%%%%%%%%%%%%%%%%%%%%%%%%%%%%%%%%%%%%
\section{Proof of Theorem \ref{22A}} \label{1D}

The following is a lemma from \cite{katopaper},  \cite[Ch5, Lemma 4.17a]{katobook}.
\begin{lemma} \label{2K}
Let $\{Q^0_k\}_{j \in \mathbb{Z}_+}$ be a complete family of orthogonal projections in a 
Hilbert space $X$ and let 
$\{Q_k\}_{j \in \mathbb{Z}_+}$ be a family of (not necessarily orthogonal) projections such that
$Q_j Q_k = \delta_{j,k}Q_j$. Assume that 
\begin{align}
\text{dim}(Q^0_0) = \text{dim}(Q_0) = m< \infty \\
\sum_{j=1}^\infty \| Q^0_j(Q_j-Q^0_j)u\|^2 \leq c_0 \|u\|^2, \quad \text{for every } \quad u \in X
\end{align}
where $c_0$ is a constant smaller than $1$. Then there is a bounded operator $W: X \rightarrow X$ with bounded inverse
such that $Q_j = W^{-1} Q^0_j W$ for $j \in \mathbb{Z}_+$.
\end{lemma}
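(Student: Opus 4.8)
The plan is to build the classical Bari--Kato transformation operator and show it is a bounded isomorphism intertwining the two families. Define
\[
W = \sum_{j=0}^{\infty} Q_j^0 Q_j,
\]
understood as a strong limit of its partial sums. The algebraic relations $Q_j^0 Q_k^0 = \delta_{jk} Q_j^0$ and $Q_j Q_k = \delta_{jk} Q_j$ give at once, for every index $j$,
\[
Q_j^0 W = Q_j^0 Q_j = W Q_j,
\]
so once $W$ is known to be boundedly invertible we recover $Q_j = W^{-1} Q_j^0 W$, which is exactly the assertion. Thus the whole lemma reduces to proving that $W$ is bounded with bounded inverse.

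For boundedness I would use the completeness $\sum_j Q_j^0 = I$ to write $W = I + T$ with $T = \sum_j Q_j^0(Q_j - Q_j^0)$, since $Q_j^0 Q_j = Q_j^0 + Q_j^0(Q_j - Q_j^0)$. Split $T = T_0 + T'$, where $T_0 = Q_0^0(Q_0 - Q_0^0)$ is the single zeroth term and $T' = \sum_{j \geq 1} Q_j^0 (Q_j - Q_j^0)$ is the tail. The key structural point is that the summands $Q_j^0(Q_j - Q_j^0)u$ lie in the mutually orthogonal ranges of $Q_j^0$, $j \geq 1$, so by the Pythagorean theorem and the hypothesis with $c_0 < 1$,
\[
\|T' u\|^2 = \sum_{j \geq 1} \|Q_j^0(Q_j - Q_j^0) u\|^2 \leq c_0 \|u\|^2,
\]
which shows the tail converges strongly and $\|T'\| \leq \sqrt{c_0} < 1$. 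The zeroth term $T_0$ is a single bounded operator whose range lies in $\mathrm{Range}(Q_0^0)$, hence of finite rank $\leq m$. Therefore $W = I + T' + T_0$ is bounded.

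For invertibility I would exploit the same splitting $W = (I + T') + T_0$. Since $\|T'\| \leq \sqrt{c_0} < 1$, the operator $I + T'$ is invertible by Neumann series, so $W = (I + T')\bigl(I + (I+T')^{-1} T_0\bigr)$. The correction $(I+T')^{-1} T_0$ is again of finite rank, so the second factor has the form $I + (\text{finite rank})$; such an operator is Fredholm of index zero, and hence $W$ is Fredholm of index zero and is invertible precisely when it is injective. Passing to the orthogonal splitting $X = \mathrm{Range}(Q_0^0) \oplus \mathrm{Range}(Q_0^0)^\perp$, injectivity of $W$ reduces to the nondegeneracy of a single finite-dimensional block, and it is here that the hypothesis $\dim Q_0 = \dim Q_0^0 = m$ enters to rule out a kernel.

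The step I expect to be the main obstacle is exactly this last one: the smallness hypothesis is imposed only for $j \geq 1$, so the zeroth, finite-rank block of $W$ is \emph{not} a small perturbation of the identity and must be treated by hand. Everything away from that block is a genuine norm-small ($\|T'\| < 1$) perturbation of the identity and is routine. The real content is that the finite-dimensional matching $\dim Q_0 = \dim Q_0^0$, combined with the Fredholm-index-zero property, forces $W$ to be injective and therefore boundedly invertible.
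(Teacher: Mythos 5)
Your construction is the right one, and in fact it is essentially Kato's own argument (the paper gives no proof of this lemma; it quotes it from Kato's 1967 note and his book, so the comparison below is with that standard proof). The operator $W=\sum_j Q_j^0Q_j$, the intertwining identity $Q_j^0W=Q_j^0Q_j=WQ_j$, the bound $\|T'\|\leq\sqrt{c_0}<1$ via orthogonality of the ranges of the $Q_j^0$, and the reduction to a Fredholm operator of index zero are all correct. The genuine gap is exactly the step you flag as ``the main obstacle'': you assert that the matching $\dim Q_0=\dim Q_0^0=m$, ``combined with the Fredholm-index-zero property, forces $W$ to be injective,'' but you give no mechanism, and none can follow formally from what precedes it, because nothing in your argument up to that point uses the hypothesis $\dim Q_0=\dim Q_0^0$ at all. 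Concretely: take $Q_0=0$ and $Q_j=Q_j^0$ for $j\geq 1$. Then $Q_jQ_k=\delta_{jk}Q_j$, the smallness sum is $0$, and every one of your steps through the Fredholm reduction holds verbatim; yet $W=I-Q_0^0$ has an $m$-dimensional kernel. So the nondegeneracy of the finite-dimensional block is real mathematical content that must be extracted from the hypotheses (it is precisely where the dimension assumption and the smallness assumption interact), not a consequence of index bookkeeping.

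The missing argument uses the smallness hypothesis a second time, applied to vectors of $\mathrm{Ran}\,Q_0$, and only then the dimension count. (i) $Q_0^0$ is injective on $\mathrm{Ran}\,Q_0$: if $x=Q_0x$ and $Q_0^0x=0$, then $Q_jx=Q_jQ_0x=0$ for $j\geq 1$, hence $Q_j^0(Q_j-Q_j^0)x=-Q_j^0x$ and
\[
\|x\|^2=\sum_{j\geq 1}\|Q_j^0x\|^2=\sum_{j\geq 1}\|Q_j^0(Q_j-Q_j^0)x\|^2\leq c_0\|x\|^2 ,
\]
forcing $x=0$. (ii) Now the hypothesis $\dim Q_0=\dim Q_0^0=m<\infty$ enters: an injective map between $m$-dimensional spaces is onto, so $Q_0^0(\mathrm{Ran}\,Q_0)=\mathrm{Ran}\,Q_0^0$; since $Wx=Q_0^0Q_0x=Q_0^0x$ for $x\in\mathrm{Ran}\,Q_0$, this gives $\mathrm{Ran}\,Q_0^0\subseteq\mathrm{Ran}\,W$. (iii) For $j\geq 1$, $\;Q_j^0Q_jQ_j^0=Q_j^0+Q_j^0(Q_j-Q_j^0)Q_j^0$ is invertible on $\mathrm{Ran}\,Q_j^0$ because $\|Q_j^0(Q_j-Q_j^0)\|\leq\sqrt{c_0}<1$, and $WQ_jQ_j^0=Q_j^0Q_jQ_j^0$, so $\mathrm{Ran}\,Q_j^0\subseteq\mathrm{Ran}\,W$ as well. (iv) $\mathrm{Ran}\,W$ is closed (Fredholm) and contains every $\mathrm{Ran}\,Q_j^0$, whose linear span is dense by completeness of $\{Q_j^0\}$; hence $W$ is surjective, and index zero then yields injectivity, i.e.\ bounded invertibility. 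With (i)--(iv) inserted at the end, your proof is complete and coincides with the classical one.
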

\begin{remark}
The Bari-Markus criterion is often given with more restrictive conditions for norms of deviations
\begin{align*}
\sum_{j=0}^{\infty} \| Q_j - Q^0_j \|^2 < \infty
\end{align*}
and with an algebraic assumption
\begin{align*}
\text{dim}(Q_j) = \text{dim}(Q_j^0) < \infty, \quad j=0,1,\ldots
\end{align*}
-see \cite{markus}, \cite{markus2} and \cite{gohberg} Ch. 6, Sect. 5.3, Theorem 5.2.

These conditions have been proven for 
Sturm-Louiville operators on $I = [0,\pi]$ with a singular potential $v \in H^{-1}$ for periodic, antiperiodic or 
Dirichlet boundary value problems \cite{dm1}, \cite{savchuk} ,  
or for $1$-dimensional Dirac operators on $[0,\pi]$ - see \cite{dm2}.
\end{remark}
We now go directly to the proof of Theorem \ref{22A}.

\begin{proof}
We first show that there exists an 
integer $N \geq N_*$ with $N_*$ from  Proposition \ref{11A} such that 
\begin{align}\label{23}
\sum_{k \geq N_1} \left\| P_k^{0}(P_k-P_k^{0}) f\right\|_2^2 \leq \frac{1}{2}, \quad \text{for each } f \in L^2(\mathbb{R}); \quad \|f\|_2=1.
\end{align}
Suppose $n\geq N_*$. 
If we write $R$ and $R^0$ as
\begin{align*}
R^0(z) = \frac{P_n^0}{z-\lambda^0_n} + \Phi_n^0(z) \quad \text{and} \quad
R(z)   = \frac{P_n}{z - \lambda_n} + \Phi_n(z)
\end{align*}
then $\Phi_n$ and $\Phi_n^0$ are analytic, operator-valued functions in $D(\lambda^0_n, 1)$.
\\
Set 
\begin{align*}
                  \Psi_n^0 := \Phi_n^0(\lambda_n) \quad
\text{and} \quad \Psi_n   := \Phi_n(\lambda_n).
\end{align*}
From the identity 
\begin{align*}
P_n-P_n^0 &= \frac{1}{2\pi i} \int_{|z-\lambda^0_n|=1/4} (R(z) - R^0(z))dz \\ 
&=\frac{1}{2\pi i} \int_{|z-\lambda^0_n|=1/4} R(z) B R^0(z) dz 
\end{align*} 
we have 
\footnote{This representation is essentially from \cite[Eqn. 4.38, Ch. 5]{katobook}; but there the terms of positive degree from the 
Laurent expansions of $R(\xi,T)$ and $R(\xi,S)$ are not taken into account.}
\begin{align*}
P_n-P_n^0 &= P_n B \Psi_n^0 + \Psi_n B P^0_n. 
\end{align*}
Let $f = \sum_{j=0}^\infty f_j h_j(x) \in L^2(\mathbb{R})$ with $\|f \|_2=1$.\\
Then
\begin{align*}
\sum_{n\geq N} \| P^0_n(P^0_n - P_n)f \|_2^2 &= \sum_{n\geq N} \| P_n^0 P_n B \Psi_n^0 f + P_n^0 \Psi_n B P^0_n f \|_2^2  \\
	& \leq  2  \sum_{n \geq N} \left( \| P_n^0 P_n B \Psi_n^0 f\|_2^2 + \| P_n^0 \Psi_n B P^0_n f \|_2^2 \right). 
\end{align*}

Let $W_\psi$ be the sequence from (\ref{0303_3})  with $b \neq 0$
\begin{align} \label{0308_1}
\psi(k)= \left[ \sup \{\|h_j b\|_2: j\geq k\} \right]^{-1}
\end{align}
and let $C(W_{\psi})$ be from (\ref{40H}), (\ref{M}).
We will now show that if $N$ is chosen with 
\begin{align}\label{24}
N &\geq   \min \{ n: W_{\psi}(n)^{-1} \leq C(W_{\psi})/64 \}\\
\label{25}
\text{and }\quad N &\geq  \min \{ n: \| h_n b \|_2 \leq (70)^{-1} \}
\end{align}
then (\ref{23}) holds.

To come to this claim it suffices to show 
\begin{align}\label{26}
\sum_{n\geq N} \| P_n^0 P_n B \Psi_n^0 f\|_2^2 &\leq 1/4 
\end{align}
and
\begin{align}
\label{26ii} 
\sum_{n\geq N}\| P_n^0 \Psi_n B P^0_n f \|_2^2  &\leq 1/4.
\end{align}

\emph{Proof of (\ref{26})}:
\\
By (\ref{20A})
\begin{align*}
\sum_{n\geq N} \| P_n^0 P_n B \Psi_n^0 f\|_2^2 & \leq  32^2 \sum_{n \geq N} \|B \Psi_n^0f\|_2^2.
\end{align*}

Now,
\begin{align*} 
\sum_{n\geq N} \|B \Psi_n^0f\|_2^2 &= 
\sum_{n\geq N}   \left\| \sum_{k \neq n} 
\frac{ B P^0_k f}{\lambda_k^0 -\lambda_n}  \right\|_2^2     = 
 \sum_{n\geq N}   \left\| \sum_{k \neq n} 
 \frac{f_k b(x)h_k(x)}{\lambda_k^0 -\lambda_n}  \right\|_2^2.
\end{align*}
We have 
\begin{align}\label{N}
\sum_{n\geq N}   \left\| \sum_{k \neq n} 
 \frac{f_k b(x)h_k(x)}{\lambda_k^0 -\lambda_n}  \right\|_2^2 = 
 \| I_N \widetilde{G}_{\tau} \xi \|^2_2 
\end{align}
where
\begin{align*}
\xi &= (\xi_k(x))_{k=0}^{\infty}, \,\, \xi_k = f_k b(x) h_k(x), \\
\tau_j &= \lambda_j^0 - \lambda_j, \quad \text{and} \\
(I_j \eta)_k &= \begin{cases}  
\eta_k \quad \text{if} \quad k \geq j, \\
	0       \quad \text{if} \quad k < j
\end{cases}
\end{align*}
or - more generally - for $F \subset \mathbb{Z}_+$
\begin{align}
\label{0310_3}
(I(F) \eta)_k &= \begin{cases}  
\eta_k \quad \text{if} \quad k \in F, \\
	0       \quad \text{if} \quad k \notin F.
\end{cases}
\end{align}
If for $b$ we take $\psi_k$ as in (\ref{0308_1}), the vector valued sequence $\xi$ 
belongs to $\widetilde{E}(W_{\psi}, L^2(\mathbb{R}))$ -- see (\ref{0303_3}). So , by (\ref{40H})
\begin{align} \label{M}
\widetilde{G}_{\tau} \xi \in C(W_\psi)  \widetilde{E}(W_\psi).
\end{align}
By (\ref{24}), (\ref{N}), and (\ref{M})
\begin{align} \label{26a}
32^2 \sum_{n\geq N}   \left\| \sum_{k \neq n} 
 \frac{f_k b(x)h_k(x)}{\lambda_k^0 -\lambda_n}  \right\|_2^2 \leq 
 32^2 C(W_\psi)^2 W_\psi(N)^{-2} \leq  1/4.
\end{align}
So (\ref{26}) holds.
\\ \\
\emph{Proof of $(\ref{26ii})$}:
\\
By (\ref{21A}) and (\ref{25})
\begin{align*}
\sum_{n\geq N}\| P_n^0 \Psi_n B P^0_n f \|_2^2  &\leq \|\Psi_n \|^2 \sum_{n \geq N}\|   B P^0_n f \|_2^2    \\
&\leq 35^2 \sum_{n \geq N} |f_n|^2 \| b h_n \|_2^2 \leq 1/4.  
\end{align*}

By justifying (\ref{26}) and (\ref{26ii}) we have shown how to choose $N$ with
\begin{align} \label{26aa}
\sum_{k \geq N} \| P_k^{0}(P_k-P_k^{0}) f\|_2^2 \leq \frac{1}{2}\|f\|_2^2, \quad \forall f \in L^2(\mathbb{R}).
\end{align}
To complete the proof of Theorem \ref{22A}, it suffices to apply Lemma \ref{2K} to the orthogonal collection of projections
\begin{align*}
\{ S^0_{N}, P^0_{N},P^0_{N+1},\ldots \}
\end{align*}
where
\begin{align*}
S^0_{N} = P_0^0 + P_0^1 + \ldots + P^0_{N-1}
\end{align*}
and the (not necessarily orthogonal) collection of projections
\begin{align*}
\{ S_{N}, P_{N},P_{N+1},\ldots \} 
\end{align*}
where
\begin{align} \label{26b}
S_{N} = S_* + P_{N_*} + P_{N_{*}+1}+ \ldots + P_{N - 1}.
\end{align}
Inequality (\ref{26aa})  and Lemma \ref{2K} imply that the series 
\begin{align*}
f= S_{N} + \sum_{k\geq N} P_kf \quad \text{for all} \quad f \in L^2(\mathbb{R})
\end{align*}
converge unconditionally. By (\ref{26b}) it follows that  
the series (\ref{23A}) converge unconditionally as well.
\end{proof}

%%%%%%%%%%%%%%%%%%%%%%%%%%%%%%%%%%%%%%%%%%%%%%%%%%%%%%%%%%%%%%%%%%%%%%
%EXAMPLES
%%%%%%%%%%%%%%%%%%%%%%%%%%%%%%%%%%%%%%%%%%%%%%%%%%%%%%%%%%%%%%%%%%%%%%

\section{Example spaces} \label{1E} 
The main hypothesis in  Propositions \ref{11A}-\ref{11AA} and Theorem \ref{22A} was $b \in V$.
In this section we will use known estimates for the Hermite functions $ \{ h_k \} $
to prove that the spaces (\ref{Oa}-\ref{Ob}) are embedded in $V$.
The following lemma is essentially \cite[Formula 8.91.10]{szego} and  \cite[Formula 6.11]{erdelyi} together with Theorem B and 
the table on p. 700 in \cite{askey}. See also \cite{skovgaard}.

\begin{lemma} \label{12}
Let $N=2n+1$. There are constants
\footnote{In what follows we use the letter  $C$ as a generic positive absolute constant.}
$C, \,\gamma >0$ such that

\begin{align} \label{13}
  |h_n(x)| \leq \begin{cases}  C (N^{1/3} + |x^2 - N| )^{-1/4}& \quad \text{if}\quad x^2 \leq 2N\\
                        C \text{exp}(-\gamma x^2)& \quad \text{if}\quad x^2 \geq 2N) \end{cases}
\end{align}
for all $n \in \mathbb{Z}_+$.
\end{lemma}

As Szeg\"o's book indicates, there is a long interesting history of estimates for Hermite, 
Laguerre and other orthogonal polynomials (see for example \cite{cramer},\cite{hille} and the paper of 
Erdelyi \cite{erdelyi}).
For later developments see \cite{nevai} and the references there. 
Inequalities (\ref{13}) imply that 
\begin{align} \label{0308_2}
|h_n(x)| \leq C(1+n)^{-1/12}, \quad x \in \mathbb{R}, n \in \mathbb{Z}_+.
\end{align}
An alternative proof of (\ref{0308_2}) is given in \cite[Lemma 4]{akhmerova}.

If $b \in L(p, \alpha)$  then
\begin{align} \label{0305_1}
\| b(x) h_n(x) \|_2 &= \| b(x)(1+|x|^2)^{-\alpha/2} h_n(x) (1+|x|^2)^{\alpha/2} \|_2 \\ 
\notag &\leq \| b(x) (1+|x|^2)^{-\alpha/2} \|_p \|h_n(x) (1+|x|^2)^{\alpha/2} \|_q
\end{align} 
where $1/p +1/q =1/2$.
Following the procedure outlined in \cite[Sect. 1.5, p.27]{thangavelu} 
(\ref{13}) can be used to bound
$$
\| h_n(x)(1+|x|^2)^{\alpha/2} \|_q = 2^{1/q} \left( \int_{0}^{\infty} |h_n(x)|^{q} (1+|x|^2)^{q \alpha/2}dx \right)^{1/q}.
$$
Breaking the integral into two parts and applying (\ref{13}) gives 
\begin{align} \label{0305_2}
\left[ 
\int_{0}^{\sqrt{N}/2}+\int_{3\sqrt{N}/2}^{2\sqrt{N}} +\int_{2\sqrt{N}}^{\infty} 
\right] |h_n(x)|^{q} (1+|x|^2)^{q \alpha/2} dx \leq 
C n^{q\alpha/2 - 1/2} 
\end{align}
and
\begin{align} \label{0305_3}
\int_{\sqrt{N}/2}^{3\sqrt{N}/2} |h_n(x)|^{q} (1+|x|^2)^{q \alpha/2} dx
\leq C n^{q(\alpha/2 - 1/12) - 1/6} \int_{0}^{N^{2/3}} (1+y)^{-q/4} dy.
\end{align}
We omit the details.
By (\ref{0305_1}) and (\ref{0305_2}-\ref{0305_3}) we have the following.
\begin{lemma} \label{0305_5}
Let $b \in L(p,\alpha)$ with $2 < p <\infty$ and $p \neq 4$. 
Then 
\begin{align} \label{0308_3}
\|b h_n\|_2 \leq C (n+1)^{\alpha/2 + t(p)}
\end{align}
with
\begin{align} \label{0305_4}
t(p) = \begin{cases} -\frac{1}{12}\left( 2-\frac{2}{p} \right)  & \quad \text{if}\quad  2 \leq p  < 4  \\
                     -\frac{1}{2p}& \quad \text{if}\quad 4 < p < \infty \end{cases}  
\end{align}
If $p=4$, then 
\begin{align} \label{0308_4}
\| b h_n \|_2 \leq C n^{\alpha/2 -1/8 }\log(n+2).
\end{align}
\end{lemma}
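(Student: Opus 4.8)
The plan is to read off the lemma from the H\"older splitting (\ref{0305_1}) together with a careful $L^q$ estimate of a weighted Hermite function. By (\ref{0305_1}), with $1/p+1/q=1/2$, one has $\|b h_n\|_2 \le \|b(1+|x|^2)^{-\alpha/2}\|_p\,A_n$, where $A_n := \|h_n(1+|x|^2)^{\alpha/2}\|_q$; the first factor is a finite constant determined by the hypothesis $b\in L(p,\alpha)$. Thus everything reduces to proving $A_n \le C(n+1)^{\alpha/2+t(p)}$ for $p\neq 4$ and $A_n \le Cn^{\alpha/2-1/8}\log(n+2)$ for $p=4$; since $N=2n+1\asymp n+1$ I shall track powers of $N$ and convert at the end.

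First I would write $A_n^q = 2\int_0^\infty |h_n|^q(1+x^2)^{q\alpha/2}\,dx$ and insert the pointwise bounds of Lemma \ref{12}, splitting $[0,\infty)$ into the turning-point layer $[\tfrac12\sqrt N,\tfrac32\sqrt N]$ (which contains the classical turning point $x=\sqrt N$) and its complement. On the complement the Gaussian tail $x^2\ge 2N$ is negligible, and on the inner oscillatory piece $[0,\tfrac12\sqrt N]$ the bound $|h_n|\le CN^{-1/4}$ gives a contribution of order $N^{q\alpha/2-q/4+1/2}$. On the layer the weight is $\asymp N^{q\alpha/2}$, and the substitution $x=\sqrt N\,s$ converts the Airy-type factor $(N^{1/3}+|x^2-N|)^{-1/4}$ of (\ref{13}) into $N^{-1/12}(1+N^{2/3}|s^2-1|)^{-1/4}$; a further change $y=N^{2/3}|s^2-1|$ then collapses the layer integral to the form recorded in (\ref{0305_3}), namely $CN^{q(\alpha/2-1/12)-1/6}\,J_q$ with $J_q:=\int_0^{N^{2/3}}(1+y)^{-q/4}\,dy$.

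Everything now turns on $J_q$, which I evaluate in the three regimes governed by the position of $q$ relative to $4$ (equivalently $p$ relative to $4$). For $2<p<4$, i.e.\ $q>4$, the integral $J_q$ converges to a constant, so the layer contributes $\asymp N^{q\alpha/2-q/12-1/6}$; comparing exponents via $-q/4+1/2\le -q/12-1/6\Leftrightarrow q\ge 4$ shows the layer dominates the complement, whence $A_n^q\asymp N^{q\alpha/2-q/12-1/6}$. For $4<p<\infty$, i.e.\ $q<4$, one has $J_q\asymp N^{2/3-q/6}$, the layer contributes $\asymp N^{q\alpha/2-q/4+1/2}$, now comparable to the inner oscillatory piece, and $A_n^q\asymp N^{q\alpha/2-q/4+1/2}$. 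At $p=q=4$ the integral $J_4=\log(1+N^{2/3})\asymp\log N$ produces $A_n^4\asymp N^{2\alpha-1/2}\log N$.

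Finally I take the $q$-th root and substitute $1/q=1/2-1/p$. In the first regime $A_n\le CN^{\alpha/2-1/12-1/(6q)}$ with $-1/12-1/(6q)=-\tfrac16+\tfrac1{6p}$, which is exactly $t(p)=-\tfrac1{12}(2-2/p)$ in (\ref{0305_4}); in the second $A_n\le CN^{\alpha/2-1/4+1/(2q)}$ with $-1/4+1/(2q)=-1/(2p)$, matching $t(p)=-1/(2p)$; and at $p=4$, $A_n\le CN^{\alpha/2-1/8}(\log N)^{1/4}\le Cn^{\alpha/2-1/8}\log(n+2)$, giving (\ref{0308_4}). Converting $N\asymp n+1$ then yields (\ref{0308_3}). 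The main obstacle I anticipate is the layer estimate together with the regime bookkeeping around $p=4$: one must verify in each regime that the turning-point layer genuinely dominates (or is comparable to) the complementary regions --- the comparison being tight precisely at $q=4$ --- and that it is exactly the transition of $J_q$ from convergent to polynomially growing that forces the logarithmic loss in the borderline case.
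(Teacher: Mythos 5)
Your proposal is correct and follows essentially the same route as the paper: the H\"older splitting (\ref{0305_1}) with $1/p+1/q=1/2$, the decomposition into a turning-point layer $[\sqrt{N}/2,3\sqrt{N}/2]$ and its complement, the pointwise bounds of Lemma \ref{12}, and the reduction of the layer contribution to $Cn^{q(\alpha/2-1/12)-1/6}\int_0^{N^{2/3}}(1+y)^{-q/4}\,dy$, which is literally the paper's (\ref{0305_3}); the paper then omits exactly the three-regime analysis of this integral that you carry out. The one place you deviate is in fact a correction of the paper's sketch: your complement estimate $CN^{q\alpha/2-q/4+1/2}$ does not match the paper's display (\ref{0305_2}), whose exponent $q\alpha/2-1/2$ agrees with yours only at $q=4$. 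As stated, (\ref{0305_2}) cannot be a true upper bound for $q<4$ (i.e.\ $p>4$): the oscillatory region $[0,\sqrt{N}/2]$ alone contributes $\asymp N^{q\alpha/2-q/4+1/2}\gg N^{q\alpha/2-1/2}$ there, consistent with the sharpness claim (\ref{0308_5}). For $q>4$ (i.e.\ $2<p<4$) it is a valid but lossy bound, and combining it literally with (\ref{0305_3}) would dominate the layer term and yield only the exponent $\alpha/2-\frac14+\frac{1}{2p}$, strictly weaker than the asserted $t(p)=-\frac{1}{12}\left(2-\frac{2}{p}\right)$. Your bookkeeping --- layer dominates when $q>4$, layer and complement comparable when $q<4$, logarithm precisely at $q=4$ --- is the correct completion of the argument, so (\ref{0305_2}) should be read with exponent $q(\alpha/2-1/4)+1/2$. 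One caveat, inherited from the lemma's statement rather than introduced by your argument: both your proof and the paper's implicitly require $\alpha$ not too negative (roughly $q\alpha/2+1/2\geq 0$), since for very negative $\alpha$ the region $x=O(1)$ already contributes $\asymp N^{-q/4}$ to $\|h_n(1+|x|^2)^{\alpha/2}\|_q^q$ and the claimed bound $C(n+1)^{\alpha/2+t(p)}$ fails (e.g.\ for $b\equiv 1\in L(p,\alpha)$ with $\alpha<-1/p$).
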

\begin{remark}
The estimates (\ref{0308_3}), (\ref{0308_4}) are sharp in the following sense:
\begin{align} \label{0308_5}
\|b h_n\|_2 \geq c (n+1)^{\alpha/2 + t(p)} \quad \text{if $2 \leq p < \infty$, $p \neq 4$}
\end{align}
and  
\begin{align} \label{0308_6}
\| b h_n \|_2 \geq c n^{\alpha/2 -1/8 }\log(n+2) \quad \text{if $p=4$.}
\end{align}
\end{remark}
\begin{proposition} \label{0312_1}
With $t$ as in (\ref{0305_4}), the spaces $L(p,\alpha)$ are embedded in 
$V$ whenever $\alpha/2 + t(p) \leq 0$ and $(p,\alpha) \neq (4,1/4)$.
Also, $L^\infty_0(\mathbb{R})$ is embedded in $V$.
\end{proposition}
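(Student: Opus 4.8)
\emph{Proof proposal.} The plan is to reduce both assertions to a single mechanism: the uniform pointwise bound (\ref{0308_2}), $|h_n(x)|\le C(1+n)^{-1/12}$, forces $\|\beta h_n\|_2\to0$ for every bounded, compactly supported $\beta$, and a density argument then upgrades mere boundedness of the sequence $(\|b h_n\|_2)_n$ to the decay $\|b h_n\|_2\to0$ demanded by the definition (\ref{5A}) of $V$. Throughout I write $\|b\|_{L(p,\alpha)}:=\|(1+|x|^2)^{\alpha/2}b\|_p$.

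First I would check that the inclusion is bounded. For $b\in L(p,\alpha)$ with $2<p<\infty$ and $p\ne4$, Lemma \ref{0305_5} gives $\|b h_n\|_2\le C\|b\|_{L(p,\alpha)}(n+1)^{\alpha/2+t(p)}$; since $\alpha/2+t(p)\le0$ the exponent is nonpositive, so $\|b\|_V=\sup_n\|b h_n\|_2\le C\|b\|_{L(p,\alpha)}$. For $p=4$ this follows from (\ref{0308_4}) once $\alpha\le1/4$. The case $p=2$ is in fact the easiest: Hölder with $q=\infty$ together with (\ref{0308_2}) gives $\|b h_n\|_2\le C(1+n)^{-1/12}\|b\|_{L(2,\alpha)}$, so there both boundedness and the decay $\|b h_n\|_2\to0$ hold at once. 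For the remaining exponents it then remains only to verify the limit condition.

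When the exponent is strictly negative the decay is already contained in these estimates: $(n+1)^{\alpha/2+t(p)}\to0$ if $\alpha/2+t(p)<0$, and $n^{\alpha/2-1/8}\log(n+2)\to0$ if $p=4$ and $\alpha<1/4$ -- which is precisely why the single point $(4,1/4)$, where the logarithm is no longer beaten and even boundedness fails, must be excluded. The crux is therefore the boundary locus $\alpha/2+t(p)=0$ (with $2<p<\infty$, $p\ne4$), where Lemma \ref{0305_5} yields uniform boundedness but no decay; here I would argue by approximation. Set $g:=(1+|x|^2)^{\alpha/2}b\in L^p$, fix $\epsilon>0$, and choose a bounded, compactly supported $g_\epsilon$ with $\|g-g_\epsilon\|_p<\epsilon$. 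Put $\beta:=(1+|x|^2)^{-\alpha/2}g_\epsilon$ and $\rho:=b-\beta$. Then $\beta$ is bounded with compact support, so $\|\beta h_n\|_2^2\le\|\beta\|_\infty^2\int_{\operatorname{supp}\beta}|h_n|^2\,dx\le C(1+n)^{-1/6}\to0$ by (\ref{0308_2}), whereas $\|\rho\|_{L(p,\alpha)}=\|g-g_\epsilon\|_p<\epsilon$, so the boundary case of Lemma \ref{0305_5} gives $\|\rho h_n\|_2\le C\epsilon(n+1)^{\alpha/2+t(p)}=C\epsilon$. Hence $\limsup_n\|b h_n\|_2\le C\epsilon$, and letting $\epsilon\downarrow0$ yields $b\in V$.

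The embedding $L^\infty_0(\mathbb{R})\hookrightarrow V$ is the clean prototype of this mechanism and uses only (\ref{0308_2}). The norm bound is trivial, $\|b h_n\|_2\le\|b\|_\infty\|h_n\|_2=\|b\|_\infty$. For the decay, given $\epsilon>0$ choose $R$ with $\operatorname{ess\,sup}_{|x|\ge R}|b|<\epsilon$ and split $b=b\mathbf{1}_{\{|x|<R\}}+b\mathbf{1}_{\{|x|\ge R\}}$; the first term is bounded with compact support, so its contribution tends to $0$ by (\ref{0308_2}) exactly as above, while the second contributes at most $\|b\mathbf{1}_{\{|x|\ge R\}}\|_\infty\|h_n\|_2\le\epsilon$. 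Again $\limsup_n\|b h_n\|_2\le\epsilon$ for every $\epsilon$, so $b\in V$. I expect the boundary equality $\alpha/2+t(p)=0$ to be the only genuine obstacle; off it, the quantitative estimates of Lemma \ref{0305_5} already deliver the required decay.
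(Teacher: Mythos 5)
Your proposal is correct and follows essentially the same route as the paper: the strict case $\alpha/2+t(p)<0$ is read off from Lemma \ref{0305_5}, and the boundary cases ($\alpha/2+t(p)=0$ and $L^\infty_0(\mathbb{R})$) are handled by approximating $b$ by compactly supported functions, combining uniform boundedness of $b\mapsto(\|bh_k\|_2)_k$ with the decay supplied by (\ref{0308_2}) on the approximants. The paper merely packages your explicit $\epsilon$-argument in abstract form --- a bounded map $\Phi:Z\to\ell^{\infty}(L^2(\mathbb{R}))$ carrying the dense subset of compactly supported functions into the closed subspace $\ell_0^{\infty}(L^2(\mathbb{R}))$.
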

\begin{proof}
If $\alpha/2 + t(p) < 0$ the result follows from Lemma \ref{0305_5}. 

Suppose now that $b \in Z,$ where $Z = L_0^\infty(\mathbb{R})$ or $Z = L(p,\alpha)$ with $\alpha/2 + t(p) =0$. 
In either case the map $\Phi:Z \rightarrow \ell^{\infty}(L^2(\mathbb{R}))$ 
defined by
\begin{align*}
\Phi(b) = \{ b(x) h_k(x) \}
\end{align*}
is bounded.
If 
$$
b \in Z_0 = \{\phi \in Z: \phi \quad \text{has compact support} \}
$$
then 
$$
\Phi(b) \in \ell_0^{\infty}(L^2(\mathbb{R})) = \{ \eta \in \ell^{\infty}(L^2(\mathbb{R})): \| \eta_k \|_2 \rightarrow 0 \}.
$$
But $Z_0$ is a dense subset of $Z$ and  
$\ell_0^{\infty}(L^2(\mathbb{R}))$ is a closed subspace of $\ell^{\infty}(L^2(\mathbb{R}))$ so 
we conclude that $\Phi(Z) \subset \ell_0^{\infty}(L^2(\mathbb{R}))$.
That is, $\| b h_k \|_2 \rightarrow 0$ whenever $b \in Z$. 
\end{proof}

\section{Bounded potentials} \label{NEWNEWONE1}
\subsection{}
Propositions \ref{11A}, \ref{11AA} and Theorem \ref{22A}
succeed in dealing with $L_0^\infty(\mathbb{R})$-potentials but the methods of the previous sections based 
on the property 
$
\| b h_k \| \rightarrow 0
$
of a  potential $b$ cannot be used to analyze an arbitrary $L^\infty(\mathbb{R})$-potential. 
For example, if 
\begin{align*}
m(x) = (-1)^nM,  \quad n \leq x < n+1, \quad n \in \mathbb{Z}
\end{align*}
then $\|m h_k \|_2 = M$ for all $k \geq 0$.
Of course, if $\| b | L^\infty \| = \rho < 1$
or $B$ is a bounded operator in $L^2(\mathbb{R})$ with $\| B \| = \rho <1$
the resolvent
\begin{align*}
R(z) = (I - R^0(z)B)^{-1}R^0(z) \\
     = R^0(z)(I-BR^0(z))
\end{align*}
is well-defined outside of the union of disks
\begin{align}
\label{0319_2}
U_\rho = \cup_{k=0}^{\infty} D(2k+1; \rho).
\end{align}
Indeed, for any $z \in \mathbb{C} \backslash U_\rho$
\begin{align*}
\|R^0(z) \| = \left( \min_{k} |z -(2k+1) | \right)^{-1} \\
	    = 1/r(z), \quad r(z) > \rho
\end{align*}
and
\begin{align*}
\|R^0(z) B \|, \|BR^0(z) \| \leq \rho/r(z) < 1.
\end{align*}
Therefore, the following is true.
\begin{proposition} \label{N1}
Let $B$ be a bounded operator in $L^2(\mathbb{R})$ with $\|B \| = \rho <1$. 
Then the operator
\begin{align} \label{4N}
L = -d^2 / dx^2 + x^2 + B
\end{align}
has a discrete spectrum $\text{Sp} L$; $\text{Sp}L \subset U_\rho \in (\ref{0319_2})$ and 
$\#\left( \text{Sp} L \cap D(2k+1; \rho) \right) = 1$ with a simple eigenvalue $\lambda_k = 2k+1 + \xi_k$, 
$|\xi_k| \leq \rho$, $k=0,1,2,\ldots$.
\end{proposition}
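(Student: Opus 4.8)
The plan is to reuse, almost verbatim, the homotopy-and-Riesz-projection scheme from the proof of Proposition~\ref{11AA}, since the displayed computation immediately preceding the statement already carries out the analytic heart of the matter. That computation shows that for every $z \in \mathbb{C} \setminus U_\rho$ the Neumann series for $(I - BR^0(z))^{-1}$ converges, because $\|BR^0(z)\| \leq \rho/r(z) < 1$, so that $R(z) = R^0(z)(I - BR^0(z))^{-1}$ is a well-defined bounded operator there. This yields at once the inclusion $\text{Sp}(L) \subset U_\rho$. To obtain \emph{discreteness} I would observe that $R^0(z)$ is compact, the harmonic oscillator $L^0$ having purely discrete spectrum $\{2k+1\}$ tending to $\infty$; hence $R(z) = (I - R^0(z)B)^{-1}R^0(z)$, a bounded operator composed with a compact one, is itself compact. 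A closed operator with nonempty resolvent set and compact resolvent has only isolated eigenvalues of finite algebraic multiplicity, which is exactly the discreteness assertion.

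It remains to count. Here the hypothesis $\rho < 1$ enters decisively: since the centers $2k+1$ are spaced $2$ apart and $2\rho < 2$, the closed disks $\overline{D}(2k+1,\rho)$ are pairwise disjoint, separated by gaps of width $2 - 2\rho > 0$. Moreover each circle $\Gamma_k := \{ |z - (2k+1)| = 1 \}$ lies in $\mathbb{C}\setminus U_\rho$, because on $\Gamma_k$ one has $r(z) = \min_j |z - (2j+1)| = 1 > \rho$; thus $R(L(t),z)$ is defined on $\Gamma_k$ for every $L(t) := L^0 + tB$, $t \in [0,1]$, each of which satisfies $\text{Sp}(L(t)) \subset U_{t\rho} \subset U_\rho$ by the same estimate with $tB$ in place of $B$. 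Consequently the Riesz projection
\begin{align*}
P_k(t) := \frac{1}{2\pi i} \int_{\Gamma_k} R(L(t),z)\, dz
\end{align*}
depends continuously on $t \in [0,1]$. Since $L(0) = L^0$ has a single simple eigenvalue $2k+1$ inside $\Gamma_k$, Lemma~$4.10$ in Chapter~$1$ of \cite{katobook} gives $\dim(P_k(1)) = \dim(P_k(0)) = 1$ for every $k$, just as in the proof of Proposition~\ref{11AA}. Hence each $\Gamma_k$, and therefore each disk $D(2k+1,\rho)$, encloses exactly one eigenvalue $\lambda_k$ of $L = L(1)$ of algebraic multiplicity one; simplicity follows, and $\lambda_k = 2k+1 + \xi_k$ with $|\xi_k| < \rho$. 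Because $\text{Sp}(L) \subset U_\rho$ and the disks are disjoint with total multiplicity one apiece, the $\lambda_k$ exhaust the spectrum.

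The only genuine obstacle, and the reason the constant $1$ is sharp, is the separation step: everything downstream (disjointness of the disks, the circles $\Gamma_k$ avoiding $U_\rho$, and hence the constancy of $\dim(P_k(t))$) rests on $2\rho < 2$ together with the existence of a separating contour of radius $1$ between neighbouring spectral clusters. If $\rho$ were allowed to reach $1$ the disks could touch and the counting argument would break down, which is precisely the phenomenon exhibited by the counterexamples promised in the introduction.
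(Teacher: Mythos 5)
Your proof is correct and takes essentially the same route as the paper: the paper's own justification is precisely the Neumann-series estimate $\|BR^0(z)\|\le \rho/r(z)<1$ for $z\notin U_\rho$ displayed immediately before the statement, with the localization and counting handled by the same homotopy $L(t)=L^0+tB$ and constancy of $\dim P_k(t)$ (Kato's Lemma 4.10, Ch.~1) already used for Proposition~\ref{11AA}, which you reproduce faithfully. Your compact-resolvent observation for discreteness is a correct, standard supplement to a step the paper leaves implicit.
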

As in Section \ref{INTRO} Proposition \ref{11A}, (\ref{15A}), one could define projections 
\begin{align*}
P_n &= \frac{1}{2 \pi i} \int_{|z - (2n+1)| = r} R(z) dz, \quad \rho < r < 1 \quad \text{for all} \quad n=0,1,2,\ldots; \\
\text{dim}(P_n) &=1, \quad P_n f = \left< f, \psi_n \right> \phi_n.
\end{align*}
But now certainly there are no associated functions.

\subsection{}
The next step, i.e., the proof of an analog of Theorem \ref{22A}, is done by a direct 
application of (a special case with multiplicity $1$)  V. Katsnelson's theorem \cite[Thms. 2,3 ]{katsnelson}:
\begin{proposition} \label{215A}
Let $A$ be a dissipative operator in a Hilbert space $H$ with a discrete spectrum 
$\text{Sp} A = \{ \mu_n \}_0^\infty$, each $\mu_n$ being a simple eigenvalue: $A \phi_n =\mu_n \phi_n, \quad n \in \mathbb{Z}_+$.

If 
\begin{align} \label{0308_7}
\sup_{0 \leq j < \infty} \sum_{\substack{ k=0\\ k \neq j } } 
\frac{ \text{Im} \mu_j \text{Im} \mu_k }{|\mu_j - \bar{\mu_k}|^2} < \infty
\end{align}
and
\begin{align}
\sup_{\substack{0 \leq j,k < \infty\\ j\neq k} } 
\frac{4\text{Im} \mu_j \text{Im} \mu_k }{|\mu_j - \bar{\mu_k}|^2} < 1
\end{align}
then $\{ \phi_n \}_0^{\infty}$ is an unconditional basis in $H$, i.e., for some bounded
invertible operator $U: H \rightarrow H$ the system $e_n = U \phi_n, \quad n=0,1,\ldots$ is 
an orthogonal basis in $H$.
\end{proposition}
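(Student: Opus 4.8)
The plan is to read the two hypotheses as the analytic fingerprints of an interpolating sequence in the upper half-plane and to transport the abstract problem into a reproducing-kernel (functional) model, where ``unconditional basis of eigenvectors'' becomes ``Riesz basis of normalized Cauchy kernels.'' First I would record the standard reduction: a complete and minimal system in a Hilbert space is an unconditional basis in the sense demanded here (the image of an orthogonal basis under a bounded invertible $U$) precisely when, after normalization, its Gram matrix $\Gamma_{jk}=\langle\hat\phi_j,\hat\phi_k\rangle$ induces a bounded and boundedly invertible operator on $\ell^2$. Thus everything reduces to two facts: first, that the Gram matrix of the $\hat\phi_n$ is controlled by the spectral data through the dissipativity of $A$; second, that the two hypotheses force that Gram matrix to be bounded and boundedly invertible on $\ell^2$.

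For the first fact I would use the dissipative structure. Writing $A=A_R+iA_I$ with $A_I=\text{Im}\,A\ge 0$, the eigenrelations $A\phi_j=\mu_j\phi_j$ give the identity $2i\langle A_I\phi_j,\phi_k\rangle=(\mu_j-\bar\mu_k)\langle\phi_j,\phi_k\rangle$, and in particular $\langle A_I\phi_j,\phi_j\rangle=\text{Im}\,\mu_j\,\|\phi_j\|^2$. In the functional model of a completely non-self-adjoint dissipative operator, $A$ is unitarily equivalent to the compressed multiplication operator on a model space $K_\Theta\subset H^2(\mathbb{C}_+)$, its eigenvectors correspond to the normalized reproducing (Cauchy) kernels $\hat K_{\mu_n}(z)\sim\sqrt{\text{Im}\,\mu_n}\,(z-\bar\mu_n)^{-1}$, and a direct computation of $\langle\hat K_{\mu_j},\hat K_{\mu_k}\rangle$ shows
\[
|\langle\hat\phi_j,\hat\phi_k\rangle|=\frac{2\sqrt{\text{Im}\,\mu_j\,\text{Im}\,\mu_k}}{|\mu_j-\bar\mu_k|}.
\]
The key observation is then purely arithmetic: with $\rho_{jk}=|(\mu_j-\mu_k)/(\mu_j-\bar\mu_k)|$ the pseudohyperbolic distance, one has $1-\rho_{jk}^2=4\,\text{Im}\,\mu_j\,\text{Im}\,\mu_k/|\mu_j-\bar\mu_k|^2=|\langle\hat\phi_j,\hat\phi_k\rangle|^2$, so the second hypothesis says exactly that the $\mu_n$ are uniformly separated, $\inf_{j\neq k}\rho_{jk}=\delta>0$, while the first hypothesis (\ref{0308_7}) says $\sup_j\sum_{k\neq j}(1-\rho_{jk}^2)<\infty$.

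For the second fact I would upgrade uniform separation to the Carleson condition. Using $-\log(1-x)\le x/(1-M_0)$ on $[0,M_0]$ with $M_0=\sup_{j\neq k}(1-\rho_{jk}^2)<1$ supplied by the second hypothesis, the bound $C$ in (\ref{0308_7}) gives $\prod_{k\neq j}\rho_{jk}^2\ge\exp(-4C/(1-M_0))>0$ uniformly in $j$; this is precisely the statement that the Blaschke product with one zero deleted is bounded below, i.e. $\{\mu_n\}$ is interpolating for $H^\infty(\mathbb{C}_+)$. By Carleson's interpolation theorem together with the Shapiro--Shields theorem, an interpolating sequence makes the normalized Cauchy kernels $\{\hat K_{\mu_n}\}$ a Riesz basis of their closed linear span; equivalently $\Gamma$ is bounded and boundedly invertible on $\ell^2$. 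Pulling this back through the unitary model equivalence yields that $\{\hat\phi_n\}$, and hence $\{\phi_n\}$, is a Riesz basis of $\overline{\text{span}}\{\phi_n\}$.

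The last step, and the place I expect the real difficulty, is completeness: one must show $\overline{\text{span}}\{\phi_n\}=H$. This is where the hypothesis that the \emph{whole} spectrum is the discrete set $\{\mu_n\}$ enters -- it should force the characteristic function $\Theta$ of $A$ to be a Blaschke product (no singular inner factor and no residual self-adjoint part), so that $K_\Theta$ is spanned by the kernels $\hat K_{\mu_n}$ and the model space is all of $H$. Making this rigorous -- simultaneously setting up the model so that the kernel identity above holds for a general (not merely finite-rank) imaginary part, and controlling the singular component to verify that $A$ is completely non-self-adjoint -- is the technical heart of the argument. Once it is in place, the Riesz-basis property on the span becomes a Riesz-basis property on $H$, producing the bounded invertible $U$ carrying $\{\phi_n\}$ to an orthogonal basis, which is the assertion.
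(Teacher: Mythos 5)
You should know at the outset that the paper contains no proof of Proposition \ref{215A} to compare against: it is quoted as a special case (multiplicity one) of V.~Katsnelson's theorem \cite{katsnelson} and used as a black box in the proof of Theorem \ref{NEWNEWONE2}. Your proposal is therefore an attempt to reprove Katsnelson's result, and the route you choose --- the functional model for dissipative operators, eigenvectors as normalized Cauchy kernels, Carleson interpolation --- is the standard modern one (essentially the Nikolskii--Pavlov--Khrushchev theory of unconditional bases of reproducing kernels; see the paper's reference \cite{khrushchev}). The analytic core of your argument is correct: the Gram identity $|\langle\hat K_{\mu_j},\hat K_{\mu_k}\rangle| = 2\sqrt{\mathrm{Im}\,\mu_j\,\mathrm{Im}\,\mu_k}\,/\,|\mu_j-\bar\mu_k|$; the identity $1-\rho_{jk}^2 = 4\,\mathrm{Im}\,\mu_j\,\mathrm{Im}\,\mu_k/|\mu_j-\bar\mu_k|^2$, which converts the second hypothesis into uniform pseudohyperbolic separation; and the estimate $-\log(1-x)\le x/(1-M_0)$ on $[0,M_0]$, which together with (\ref{0308_7}) gives $\inf_j\prod_{k\ne j}\rho_{jk}>0$, i.e., the Carleson interpolation condition. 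Shapiro--Shields then yields, correctly, that $\{\phi_n\}$ is an unconditional (Riesz) basis \emph{of its closed linear span}.

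The genuine gap is the step you yourself flag --- completeness --- and it is worse than you suspect: it cannot be closed from the stated hypotheses. Your hope that ``$\mathrm{Sp}\,A=\{\mu_n\}$ forces the characteristic function to be a Blaschke product'' fails, because a singular inner factor need not produce any spectrum in $\mathbb{C}$ at all. Concretely, let $A_2 = i\,d/dx$ on $L^2(0,a)$ with domain $\{u\in H^1(0,a): u(0)=0\}$; this operator is maximal dissipative, completely non-self-adjoint, has compact resolvent and \emph{empty} spectrum (its characteristic function is the singular inner function $e^{iaz}$). If $A_1$ is any dissipative operator satisfying the hypotheses and possessing a complete eigensystem, then $A=A_1\oplus A_2$ on $H=H_1\oplus H_2$ satisfies every hypothesis of Proposition \ref{215A} verbatim --- discrete spectrum $\{\mu_n\}$, simple eigenvalues, both Carleson conditions --- yet its eigenvectors span only $H_1$ and so cannot be mapped onto an orthogonal basis of $H$ by any bounded invertible $U$. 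In other words, the proposition as written tacitly presupposes completeness (Katsnelson's theorem concerns systems of root vectors assumed complete, equivalently concludes basisness in the span), and any correct proof must either add that hypothesis or weaken the conclusion, exactly as your argument in fact does. For the paper's application the point is harmless, since completeness of the eigensystem of $L^0+B$ can be supplied separately (e.g., by Keldysh's theorem, as the remark following Theorem \ref{NEWNEWONE2} indicates); but your proof, as it stands, establishes the span statement, not the statement as written, and no refinement of your model-theoretic setup can do better.
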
 
It leads to the following.
\begin{theorem}[ ( \`{a} la folklore c. 1970) ] \label{NEWNEWONE2}
Let $B$ and $L$ be as in Proposition \ref{N1}, and 
\begin{align} \label{N2}
L \phi_k  = \lambda_k \phi_k, \quad \lambda_k \in D_k, \quad k=0,1,\ldots.
\end{align}
The system of eigenfunctions $\{ \phi_k \}_0^{\infty}$ is an unconditional 
basis in $L^2(\mathbb{R})$.
\end{theorem}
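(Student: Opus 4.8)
The plan is to deduce the theorem from V. Katsnelson's criterion, Proposition~\ref{215A}, which applies to a \emph{dissipative} operator. The operator $L$ of (\ref{4N}) is not itself dissipative, since a general bounded $B$ with $\|B\|=\rho<1$ produces $\text{Im}\langle Bf,f\rangle$ of either sign. The key move is therefore to pass to the shifted operator
\begin{align*}
A := L + i\rho I,
\end{align*}
which has the \emph{same} eigenfunctions $\phi_k$ as $L$ and eigenvalues $\mu_k = \lambda_k + i\rho$. Because $L^0$ is self-adjoint, $\text{Im}\langle L^0 f,f\rangle = 0$, so
\begin{align*}
\text{Im}\langle Af,f\rangle = \text{Im}\langle Bf,f\rangle + \rho\|f\|_2^2 \geq -\|B\|\,\|f\|_2^2 + \rho\|f\|_2^2 \geq 0,
\end{align*}
whence $A$ is dissipative, with discrete and simple spectrum inherited from Proposition~\ref{N1}. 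Since being an unconditional basis is unaffected by the additive constant $i\rho$, it suffices to verify the two hypotheses of Proposition~\ref{215A} for the family $\{\mu_k\}$.

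Next I would record the geometry of $\{\mu_k\}$. Writing $\mu_k = (2k+1+\text{Re}\,\xi_k) + i(\rho + \text{Im}\,\xi_k)$ with $|\xi_k|\leq\rho$ (from Proposition~\ref{N1}), the imaginary parts obey $0 \leq a_k := \text{Im}\,\mu_k \leq 2\rho$, consistent with dissipativity, while for $j\neq k$ the real parts are separated:
\begin{align*}
|\text{Re}\,\mu_j - \text{Re}\,\mu_k| \geq 2|j-k| - 2\rho \geq 2 - 2\rho > 0.
\end{align*}
Using $|\mu_j - \overline{\mu_k}|^2 = (\text{Re}\,\mu_j - \text{Re}\,\mu_k)^2 + (a_j + a_k)^2$, both Katsnelson quantities are controlled through these two facts alone.

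For the finiteness condition (\ref{0308_7}) I would bound each summand by $a_j a_k\,(\text{Re}\,\mu_j - \text{Re}\,\mu_k)^{-2} \leq 4\rho^2(2|j-k|-2\rho)^{-2}$; since each value $m=|j-k|\geq1$ occurs for at most two indices $k$, and $2m-2\rho\geq m$ for $m\geq 2$, the tail compares with $\sum_m m^{-2}$ while the two $m=1$ terms are harmless, so the sum is finite uniformly in $j$. For the strict bound I would combine the AM--GM inequality $4a_ja_k\leq(a_j+a_k)^2$ with the gap and the upper bound $a_j+a_k\leq 4\rho$, and with monotonicity of $t\mapsto t/(c+t)$:
\begin{align*}
\frac{4 a_j a_k}{(\text{Re}\,\mu_j-\text{Re}\,\mu_k)^2 + (a_j+a_k)^2} \leq \frac{(a_j+a_k)^2}{(2-2\rho)^2 + (a_j+a_k)^2} \leq \frac{16\rho^2}{(2-2\rho)^2 + 16\rho^2} < 1.
\end{align*}
Proposition~\ref{215A} then yields that $\{\phi_k\}$ is an unconditional basis in $L^2(\mathbb{R})$.

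The main obstacle is conceptual rather than computational: $L$ has no built-in dissipativity, so the entire argument hinges on recognizing that the shift $A=L+i\rho I$ manufactures it without disturbing the eigenfunctions. The delicate quantitative point is the strict inequality in the last display, where the final bound $16\rho^2\big/\big((2-2\rho)^2+16\rho^2\big)$ increases to $1$ as $\rho\uparrow1$; this is exactly where the hypothesis $\rho<1$ is used, and it accounts for the sharpness of the constant asserted after Theorem~\ref{NEWNEWONE2}.
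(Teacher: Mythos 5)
Your proposal is correct and follows essentially the same route as the paper: shift to $\widetilde{L}=L+i\rho$ to manufacture dissipativity, then verify Katsnelson's two conditions (Proposition \ref{215A}) using the eigenvalue localization $\mu_k = 2k+1+\xi_k+i\rho$, $|\xi_k|\le\rho$, arriving at the identical critical bound $4\rho^2/\bigl((1-\rho)^2+4\rho^2\bigr)<1$. If anything, your uniform treatment of all pairs $j\neq k$ is slightly cleaner than the paper's reduction to neighboring indices, but the substance is the same.
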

\begin{proof}
To have a dissipative operator we consider 
\begin{align*}
\widetilde{L} = L+ i \rho = -d^2/dx^2 + x^2 + (B+ i \rho)
\end{align*}
so if $B = B_1+ B_2$, $B_j$ selfadjoint, $j=1,2$, then 
$B_2 + \rho \geq 0$. The eigenvalues are shifted as well
so
\begin{align}
\label{N3}
\mu_k                                           &= \lambda_k + i \rho = 2k+1+ \xi_k + i \rho, \\
\notag
\text{and} \quad  \text{Im} \mu_k   &\geq 0, \quad | \xi_k | \leq \rho, \quad k=0,1, \ldots.
\end{align}
Therefore, 
$2k+1 - \rho \leq \text{Re}\mu_k = \text{Re}(\lambda_k) \leq 2k+1+ \rho$, and
\begin{align*}
0 \leq \text{Im} \mu_k = \text{Im} \lambda_k + \rho \leq 2 \rho.
\end{align*}
The condition 
\begin{align*} 
\sup_{k \neq j} \frac{ 4 \text{Im} \mu_k \cdot \text{Im} \mu_j}{|\mu_k - \bar{\mu_j}|^2} < 1
\end{align*}
holds as it follows from the following estimates.
This ratio is the largest when $k$ and $j$ are neighbors, say, $j=k+1$.

Let $z = 2k+ 1+ x + i y  \quad w = 2k+3+u+ iv$; $|x|, |u| \leq \rho < 1$ and $0 \leq y,v \leq h=2 \rho$.
Then
\begin{align*}
\zeta^2 &= \frac{4 y v}{(2+x-u)^2 + (y+v)^2} \leq \frac{(y+v)^2}{(2(1-\rho))^2+ (y+v)^2} \\ 
        &\leq \frac{(2h)^2}{4(1-\rho)^2+(2h)^2} = \frac{4 \rho^2}{(1-\rho)^2+4\rho^2} < 1.
\end{align*}

Another condition (\ref{0308_7}) to check is 
\begin{align*}
s^* = \sup_{k} \sum_{j \neq k} \frac{1}{|\mu_k - \bar{\mu_j}|^2} < \infty.
\end{align*}
By (\ref{N3}) 
\begin{align*}
| \mu_k - \bar{\mu_j}| &= | 2(k-j) + \xi_k - \bar{\xi}_j| \\
	&\geq 2|k-j|-2\rho \geq 2(1-\rho)|k-j|
\end{align*}
and 
\begin{align*}
s^* &\leq \frac{1}{4(1-\rho)^2} \cdot 2 \cdot \frac{\pi^2}{6} 
< \frac{1}{(1-\rho)^2} < \infty.
\end{align*}
Proposition \ref{215A} implies that $\{ \phi_k \}$ is an unconditional basis.
\end{proof}

\begin{remark}
For any bounded operator $B$, with $R^0(z), \quad \text{Im}z \neq 0$, 
being of the Schatten class $S_r, \quad r>1$, 
the products $R^0(z)B$, $BR^0(z)$ are in $S_r$ as well so we can use 
M. Keldysh's theorem \cite[Thm. 4.3]{markus} to claim: \\
\emph{the spectrum $\text{Sp}(L^0 + B)$ is discrete and its SEAF is complete}.
\\
But this SEAF is not necessarily an unconditional basis.
\end{remark}

\subsection{}
We will now construct examples of bounded operators $B$,\quad $\| B \|=1$, 
such that the perturbation (\ref{4N}) has a discrete spectrum, all points of $\text{Sp}(L^0+B)$ 
are simple eigenvalues and (\ref{N2}) holds, the system $\{ \phi_k \}$ is complete but it is 
\emph{not} a basis in $L^2(\mathbb{R})$.

Special 2-dimensional blocks play an important role in this construction.
For $0 < t < 1, \quad 0 < s = 1-k^2 < 1, \quad k>0$ we define a $2 \times 2$ matrix 
\begin{align} \label{N4}
b = \begin{bmatrix} 1-t&&t \\ && \\-st&&-1+t\end{bmatrix}
\end{align}
\begin{lemma} \label{N5}
As an operator in $\mathbb{C}^2$ with the Euclidean norm
\begin{align}\label{N6}
1-t \leq \|b\| \leq 1;
\end{align}
moreover,
\begin{align} \label{N7}
1-\frac{1}{2} tk^2 \leq \|b\| \leq 1-\frac{1}{2} t(1-t) k^2.
\end{align}
\begin{proof}
Of course, $\| b \|$ is larger than any of its entries
so $1-t \leq \| b \|$. Then
\begin{align*}
b = (1-t)
\begin{bmatrix}
1 & 0 \\ 0 & -1
\end{bmatrix}
+ t
\begin{bmatrix}
0 & 1 \\
-s & 0
\end{bmatrix}
\end{align*}
and  $\|b \| \leq 1-t + t =1$ so (\ref{N6}) holds.
To get more accurate estimates let us take $e= (1 \; 1)^{T}$, then  
\begin{align} \label{N8}
\|b \|^2 &\geq \frac{\|be \|^2}{\|e\|^2} = \frac{1}{2} \left( 1+ 1-2 tk^2 +t^2 k^4 \right) \\
      \notag   &1-tk^2+ \frac{1}{2} t^2 k^4 \geq \left( 1-\frac{1}{2}tk^2 \right)^2
\end{align}
On the other side, for $p = (x \; y)^{T}$,\quad $|x|^2 + |y|^2 \leq 1$
\begin{align}\label{N9}
\| b p \|^2 &= |(1-t)x + ty |^2 + |stx + (1-t)y|^2 \\
  \notag          &\leq (1-t)^2 (|x|^2+|y|^2) + t^2|y|^2+s^2t^2|x|^2+ 2t(1-t)(1+s)|x||y| \\
  \notag	    &\leq (1-t)^2 + t^2 + t(1-t)(2-k^2) \\
  \notag	    & = 1- t(1-t)k^2 \leq \left( 1-\frac{1}{2} t (1-t)k^2 \right)^2.
\end{align}
So (\ref{N8}) and (\ref{N9}) justify (\ref{N7}).
\end{proof}
\end{lemma}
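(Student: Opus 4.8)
The plan is to read $\|b\|$ as the largest singular value of the $2\times2$ matrix $b$, i.e. $\|b\|^2=\sup\{\|bp\|^2:\ p\in\mathbb{C}^2,\ \|p\|\le1\}$, and to extract all four inequalities from well-chosen test vectors and elementary majorizations, rather than diagonalizing $b^{*}b$ (whose eigenvalues are computable but unpleasant). For the coarse bounds (\ref{N6}) I would first use that the operator norm dominates the modulus of every entry, so $\|b\|\ge|b_{11}|=1-t$. For the matching upper bound I would split
\begin{align*}
b=(1-t)\begin{bmatrix}1&0\\0&-1\end{bmatrix}+t\begin{bmatrix}0&1\\-s&0\end{bmatrix},
\end{align*}
note that the diagonal summand has norm $1$ and that the second summand has singular values $\{s,1\}$ and hence norm $1$ (since $0<s<1$), and conclude $\|b\|\le(1-t)+t=1$ by the triangle inequality.

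For the refined lower bound in (\ref{N7}) I would test against $e=(1\ 1)^{T}$. Since $s-1=-k^2$, a direct computation gives $be=(1,\,-(1-tk^2))^{T}$, whence
\begin{align*}
\|b\|^2\ge\frac{\|be\|^2}{\|e\|^2}=\frac{1+(1-tk^2)^2}{2}=1-tk^2+\tfrac12 t^2k^4\ge\Bigl(1-\tfrac12 tk^2\Bigr)^2,
\end{align*}
the last step being $\tfrac12 t^2k^4\ge\tfrac14 t^2k^4$; as $1-\tfrac12 tk^2>0$, a square root gives the claim. For the refined upper bound I would take a unit vector $p=(x\ y)^{T}$ and expand $\|bp\|^2=|(1-t)x+ty|^2+|stx+(1-t)y|^2$. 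Because $b_{22}=-(1-t)$, the two cross terms \emph{reinforce}, producing the coefficient $2t(1-t)(1+s)=2t(1-t)(2-k^2)$ on $\mathrm{Re}(x\bar y)$; relaxing $s^2\le1$ in the $|x|^2$-coefficient and maximizing over the unit ball (the optimum sits at $p\propto(1,1)$, where $\mathrm{Re}(x\bar y)=\tfrac12$) yields
\begin{align*}
\|b\|^2\le(1-t)^2+t^2+t(1-t)(2-k^2)=1-t(1-t)k^2\le\Bigl(1-\tfrac12 t(1-t)k^2\Bigr)^2,
\end{align*}
and a square root finishes (\ref{N7}).

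The algebra is otherwise routine; the single delicate point is the sign of the cross term in the upper bound. Because $b_{22}=-(1-t)$, the two off-diagonal contributions to $\|bp\|^2$ add rather than cancel, giving the factor $1+s=2-k^2$; an arithmetic slip here would replace it by $1-s=k^2$ and corrupt the constant, so I would verify that expansion before assembling anything. After that, the only inequality that loses tightness is the relaxation $s^2\le1$, and it remains only to check that the maximization collapses exactly to $1-t(1-t)k^2$ and that both $1-\tfrac12 tk^2$ and $1-\tfrac12 t(1-t)k^2$ are positive (they are, since $tk^2<1$ and $t(1-t)k^2<\tfrac14$), so that taking square roots preserves the inequalities.
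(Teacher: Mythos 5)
Your proposal is correct and coincides with the paper's own argument step for step: the same entrywise lower bound and two-term splitting of $b$ for (\ref{N6}), the same test vector $e=(1\;1)^{T}$ giving $\|b\|^2\ge 1-tk^2+\tfrac12 t^2k^4\ge(1-\tfrac12 tk^2)^2$, and the same expansion of $\|bp\|^2$ with the reinforcing cross term $2t(1-t)(1+s)$, the relaxation $s^2\le 1$, and $2|x||y|\le 1$ for the upper half of (\ref{N7}). The only difference is that you make explicit the positivity checks needed before taking square roots, which the paper leaves implicit.
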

\begin{lemma}
Let 
\begin{align}
\ell^0 = \begin{bmatrix}E & 0 \\ 0 & E+2 \end{bmatrix} \quad \text{and} \quad \ell &= \ell^0 +b.
\end{align}
Then 
\begin{align}
\ell = (E+1) + tc \quad \text{where} \quad c = \begin{bmatrix} -1 & 1 \\ -1+k^2 & 1 \end{bmatrix} 
\end{align}
and
\begin{align} \label{0308_8}
cg^{\pm} &= \pm k g^{\pm} , \quad g^{\pm} = (1 \quad 1\pm k)^T, \\
\notag    \ell g^{\pm} &= (E+1) \pm tk.
\end{align}
\end{lemma}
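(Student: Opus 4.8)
The statement is a direct verification in $2\times 2$ matrices, so the plan is simply to substitute the definitions and read off each identity. First I would compute $\ell = \ell^0 + b$ entrywise. With $\ell^0 = \mathrm{diag}(E, E+2)$ and $b$ from (\ref{N4}) this gives
\[
\ell = \begin{bmatrix} E+1-t & t \\ -st & E+1+t \end{bmatrix}.
\]
Subtracting the scalar part $(E+1)I$ and factoring out $t$, I would find $\ell - (E+1)I = t\,M$ with $M = \begin{bmatrix} -1 & 1 \\ -s & 1 \end{bmatrix}$. The only algebraic point here is the lower-left entry: invoking the defining relation $s = 1-k^2$ from (\ref{N4}) rewrites $-s = -1+k^2$, so $M$ is exactly the matrix $c$ of the lemma, establishing $\ell = (E+1)I + tc$.

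Next I would check the eigenrelations $cg^{\pm} = \pm k\,g^{\pm}$ by a single matrix-vector multiplication. Applying $c$ to $g^{\pm} = (1,\,1\pm k)^{T}$ yields first coordinate $-1 + (1\pm k) = \pm k$ and second coordinate $(-1+k^2) + (1\pm k) = k^2 \pm k = k(k\pm 1)$. Recognizing that $(\pm k,\, k(k\pm1))^{T} = \pm k\,(1,\,1\pm k)^{T}$ then gives $cg^{\pm} = \pm k\,g^{\pm}$; the one thing to watch is the sign bookkeeping in $k(k\pm1) = \pm k(1\pm k)$.

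Finally, the eigenrelation for $\ell$ follows from linearity together with the two facts just proved: since $\ell = (E+1)I + tc$ and $cg^{\pm} = \pm k g^{\pm}$,
\[
\ell g^{\pm} = (E+1)g^{\pm} + t\,c\,g^{\pm} = (E+1)g^{\pm} \pm tk\,g^{\pm} = \big((E+1)\pm tk\big)\,g^{\pm},
\]
which is the displayed relation (\ref{0308_8}). There is no genuine obstacle: the lemma is purely computational, and the sole risk is a sign or substitution slip when replacing $s$ by $1-k^2$ and when extracting the eigenvalue from the factor $k(k\pm1)$.
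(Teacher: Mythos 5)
Your proof is correct and is exactly the paper's approach: the paper's own proof simply says the formulas "could be checked by direct substitution," and you carry out that substitution explicitly (including the key step $-s=-1+k^2$ and the eigenvalue extraction $k^2\pm k=\pm k(1\pm k)$) without error.
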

\begin{proof}
As soon as these formulas are written they could be checked by direct substitution.
\end{proof}
\begin{lemma} \label{NEWONE1}
The angle $\alpha$ between eigenvectors $g^+$ and $g^-$ from (\ref{0308_8}) is determined by the equation
\begin{align*}
(\cos \alpha )^2 = 1 - \frac{k^2}{1+ \frac{1}{4} k^4} > 1-k^2
\end{align*}
so $\sin \alpha < k$. With the basis decomposition 
\begin{align} \label{NEWONE2}
f = \Phi_0(f) u_0 + \Phi_1(f) u_1, \quad u_0 = \frac{g^+}{ \|g^+\|}, \quad u_1 = \frac{g^-}{\|g^- \|}.
\end{align}
in $\mathbb{C}^2$ we have 
\begin{align}\label{NEWNEWONE}
\| \Phi_0 \| = \| \Phi_1 \| = \frac{1}{\sin \alpha} > \frac{1}{k}.
\end{align}
\end{lemma}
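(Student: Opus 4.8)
The plan is to read everything off the explicit eigenvectors $g^{+}=(1,1+k)^{T}$ and $g^{-}=(1,1-k)^{T}$ recorded in (\ref{0308_8}), since the angle $\alpha$ is by definition governed by $\cos\alpha=\langle g^{+},g^{-}\rangle/(\|g^{+}\|\,\|g^{-}\|)$. First I would compute the three scalar quantities
\begin{align*}
\langle g^{+},g^{-}\rangle = 1+(1+k)(1-k)=2-k^{2},\qquad \|g^{\pm}\|^{2}=1+(1\pm k)^{2}=2\pm 2k+k^{2},
\end{align*}
whence $\|g^{+}\|^{2}\|g^{-}\|^{2}=(2+k^{2})^{2}-(2k)^{2}=4+k^{4}$. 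This gives
\begin{align*}
(\cos\alpha)^{2}=\frac{(2-k^{2})^{2}}{4+k^{4}}=\frac{4-4k^{2}+k^{4}}{4+k^{4}},
\end{align*}
which I would then rewrite as $1-\frac{4k^{2}}{4+k^{4}}=1-\frac{k^{2}}{1+\frac14 k^{4}}$, matching the claimed formula.

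The inequality $(\cos\alpha)^{2}>1-k^{2}$ reduces at once to $\frac{k^{2}}{1+\frac14 k^{4}}<k^{2}$, i.e.\ to $1<1+\frac14 k^{4}$, which holds for every $k>0$. Consequently $(\sin\alpha)^{2}=1-(\cos\alpha)^{2}=\frac{k^{2}}{1+\frac14 k^{4}}<k^{2}$, and since the admissible range $0<k<1$ forces $\cos\alpha=(2-k^{2})/\sqrt{4+k^{4}}>0$, so that $\alpha\in(0,\pi/2)$ and $\sin\alpha>0$, we conclude $\sin\alpha<k$.

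For the norms of the coefficient functionals in (\ref{NEWONE2}) I would argue geometrically rather than invert a Gram matrix. To isolate $\Phi_{0}(f)$ from $f=\Phi_{0}(f)u_{0}+\Phi_{1}(f)u_{1}$, pair $f$ with a unit vector $w$ orthogonal to $u_{1}$; then $\langle f,w\rangle=\Phi_{0}(f)\langle u_{0},w\rangle$, and $\langle u_{0},w\rangle$ is exactly the component of the unit vector $u_{0}$ perpendicular to $u_{1}$, which has modulus $\sin\alpha$ (from $|\langle u_{0},u_{1}\rangle|^{2}+|\langle u_{0},w\rangle|^{2}=1$). Hence $|\Phi_{0}(f)|=|\langle f,w\rangle|/\sin\alpha\le \|f\|/\sin\alpha$ by Cauchy--Schwarz, with equality when $f$ is parallel to $w$, so $\|\Phi_{0}\|=1/\sin\alpha$; the same argument with the roles of $u_{0},u_{1}$ interchanged gives $\|\Phi_{1}\|=1/\sin\alpha$. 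Combined with $\sin\alpha<k$ this yields (\ref{NEWNEWONE}).

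The computation is essentially bookkeeping, so there is no deep obstacle; the only point worth double-checking is conceptual rather than computational. One must confirm that the inner products here are the genuine real values coming from real eigenvectors, so that $\cos\alpha$ is a legitimate cosine of an honest planar angle and the elementary orthogonal-decomposition argument for $\|\Phi_{0}\|,\|\Phi_{1}\|$ applies verbatim; once that is granted, every step is forced.
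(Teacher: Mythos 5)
Your proposal is correct and follows essentially the same route as the paper: a direct evaluation of $(\cos\alpha)^2=\frac{(g^+,g^-)^2}{\|g^+\|^2\|g^-\|^2}=1-\frac{k^2}{1+\frac14 k^4}$, from which $\sin\alpha<k$ and $\|\Phi_0\|=\|\Phi_1\|=1/\sin\alpha$ follow. The paper dismisses the norm identity (\ref{NEWNEWONE}) as ``straight-forward calculations,'' so your orthogonal-decomposition argument (pairing $f$ against a unit vector $w\perp u_1$, using $|\langle u_0,w\rangle|=\sin\alpha$ and Cauchy--Schwarz with equality at $f\parallel w$) is a welcome filling-in of exactly the details the paper omits.
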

\begin{proof}
Again, direct evaluation shows
\begin{align*}
(\cos \alpha )^2 &= \frac{(g^+, g^-)^2}{ \| g^+ \|^2 \|g^-\|^2} = \frac{(1+(1+k)(1-k))^2}{(1+(1+k)^2)(1+(1-k)^2)} \\
                 &= 1- \frac{k^2}{1 + \frac{1}{4} k^4} > 1-k^2 \quad \text{and} \quad \sin \alpha < k.
\end{align*}
(\ref{NEWNEWONE}) comes from straight-forward calculations.
\end{proof}

\subsection{}
The special 2-dimensional blocks of this subsection give us a diagonal representation of 
2-dimensional elements of a "bad" perturbation $B$, $\|B\|=1$.
\begin{proposition}\label{NEWNEWONE3}
Let $H = \ell^2(\mathbb{Z}_+)$ and $L^0 e_n = (2n+1)e_n, \quad n=0,1,2,\ldots$. Put
$B = \{ b(m) \}_0^{\infty}$, $b(m) \in (\ref{N4})$ with fixed $t, \quad 0< t< 1$, and 
$s = s(m) = 1-k^2(m), \quad k(m) = 2^{-m-1}$, i.e., 
\begin{align}
Be_{2m}   &= (1-t)e_{2m} + te_{2m+1} \\
\notag Be_{2m+1} &= -s(m) t e_{2m} - (1-t) e_{2m+1}, \quad m=0,1,2,\ldots.
\end{align}
Then $\|B \| = 1$; for $L = L^0 + B$
\begin{align*}
\text{Sp}(L) = \{ 4m+2 \pm t\cdot k(m), \,\, m\in \mathbb{Z}_+ \},
\end{align*}
each point $\lambda \in \text{Sp} L $ is a simple eigenvalue,
the system of eigenvectors $\{ \phi_m, \psi_m \}$ 
\begin{align} \label{NEWONE3}
\phi_m &= e_{2m} + (1+k(m) ) e_{2m+1}, \\
\notag \psi_m &= e_{2m} + (1-k(m) ) e_{2m+1}
\end{align}
is complete in $H$ but it is \emph{not} a basis in $H$.
\end{proposition}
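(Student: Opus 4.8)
The plan is to exploit the fact that both $L^0$ and $B$ are block-diagonal with respect to the orthogonal decomposition $H = \bigoplus_{m\geq 0} H_m$, where $H_m = \text{span}\{e_{2m}, e_{2m+1}\}$, so that the whole analysis reduces to the $2\times 2$ blocks already studied in Lemma \ref{N5} and Lemma \ref{NEWONE1}. First I would note that $L^0$ acts on $H_m$ as $\text{diag}(4m+1,\,4m+3)$, i.e. as $\ell^0$ with $E = 4m+1$, while $B$ acts as $b(m)$; hence $L|_{H_m}=\ell$ with parameters $t$ and $k=k(m)$. Since $B$ is block diagonal, $\|B\| = \sup_m \|b(m)\|$, and (\ref{N6})--(\ref{N7}) give $\|b(m)\| \le 1 - \tfrac12 t(1-t)k(m)^2 < 1$ for every $m$ together with $\|b(m)\| \geq 1 - \tfrac12 t\,k(m)^2 \to 1$ as $k(m)=2^{-m-1}\to 0$; therefore $\|B\|=1$.

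Next I would read off the spectrum and eigenvectors directly from the block decomposition. Because $L^0$ has discrete spectrum tending to infinity (compact resolvent) and $B$ is bounded, $L$ also has compact resolvent and hence discrete spectrum, with $\text{Sp}(L)=\bigcup_m \text{Sp}(\ell)$. By (\ref{0308_8}) the restriction to $H_m$ has the two eigenvalues $(4m+2)\pm t\,k(m)$ with eigenvectors $g^\pm$, which in the $e$-coordinates are precisely $\phi_m$ and $\psi_m$ of (\ref{NEWONE3}). All these numbers are pairwise distinct: within a block they differ by $2t\,k(m)>0$, and eigenvalues from different blocks lie in disjoint intervals about the centres $4m+2$ (the deviation being at most $t\,k(m)\le t/2<1/2$). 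Thus every point of $\text{Sp}(L)$ is a simple eigenvalue, and $\{\phi_m,\psi_m\}$ is the complete list of eigenvectors. Completeness is then immediate: for each $m$ the vectors $\phi_m,\psi_m$ are linearly independent (as $k(m)\neq 0$) and so span $H_m$, while the algebraic span $\bigoplus_m H_m$ is dense in $H$.

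The only substantial point, and the one I expect to be the main obstacle, is to show that $\{\phi_m,\psi_m\}$ is \emph{not} a basis; this is where Lemma \ref{NEWONE1} enters. The key structural remark is that any Schauder expansion must respect the blocks: if $\{\phi_m,\psi_m\}$ (in any order) were a basis and $f=\sum_n c_n x_n$ the expansion of a given $f$, then applying the continuous orthogonal projection $Q_m$ onto $H_m$ gives $Q_m f = a_m\phi_m + b_m\psi_m$, so the coefficient attached to $\phi_m$ is forced to equal the coordinate of $Q_m f$ along $\phi_m$ in the oblique basis $\{\phi_m,\psi_m\}$ of $H_m$. Consequently the coordinate functional $\phi_m^*$ is, up to the factor $\|\phi_m\|^{-1}$, exactly the functional $\Phi_0$ of (\ref{NEWONE2}), whence by (\ref{NEWNEWONE})
\[
\|\phi_m^*\| = \frac{1}{\|\phi_m\|\,\sin\alpha_m} > \frac{1}{\|\phi_m\|\,k(m)}.
\]
Since $\|\phi_m\|^2 = 1+(1+k(m))^2$ stays between $2$ and $4$ while $k(m)\to 0$, the right-hand side tends to $\infty$, so $\sup_m\|\phi_m^*\|=\infty$.

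Finally I would close the argument with the standard basis inequality: for a Schauder basis $\{x_n\}$ with coefficient functionals $\{x_n^*\}$, partial sums $S_n$, and finite basis constant $K=\sup_n\|S_n\|$, one has $x_n^*(f)\,x_n = S_n f - S_{n-1}f$ and hence $\|x_n^*\|\le 2K/\|x_n\|$. As $\|\phi_m\|\ge\sqrt2$ is bounded below, this would force $\sup_m\|\phi_m^*\|\le \sqrt2\,K<\infty$, contradicting the divergence just established. Therefore $\{\phi_m,\psi_m\}$ cannot be a basis, completing the proof. The delicate step to write out carefully is the block-respecting property of the expansion together with the identification of $\phi_m^*$ with $\Phi_0$; everything else is routine bookkeeping transferred from the $2\times 2$ lemmas.
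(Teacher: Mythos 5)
Your proof is correct and follows essentially the same route as the paper: block-diagonal reduction to the $2\times 2$ Lemmas \ref{N5}--\ref{NEWONE1} for $\|B\|=1$, the spectrum, simplicity and completeness, and then non-basisness from the blow-up of the coordinate functionals $\|\phi_m^*\|>\bigl(\|\phi_m\|\,k(m)\bigr)^{-1}\to\infty$. Your explicit appeal to the Schauder-basis inequality $\|x_n^*\|\,\|x_n\|\le 2K$ merely spells out the final step that the paper (which works with the orthogonal block projections $Q_m$ and the functionals $\Phi_m,\Psi_m$) leaves implicit.
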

Of course, the same is true for any $k(m), \quad 0 < k(m) < 1, \quad 
\lim_{m \rightarrow \infty} k(m) = 0$.
\begin{proof}
The proof comes directly from Lemmas \ref{N5}-\ref{NEWONE1}.
We have $\|B \| = \sup \| b(m) \|$ so
\begin{align*}
1 \geq \| B \| \geq \sup \left\{ 1- \frac{1}{2} t k^2(m): m \in  \mathbb{Z}_+ \right\}=1,
\end{align*}
and $\|B \|=1$.

The 2-dimensional subspaces 
\begin{align*}
Y_m = \text{span} \{ e_{2m}, e_{2m+1} \} = \text{span} \{ \phi_m, \psi_m \}
\end{align*}
are invariant for $B$ and $L = L^0 + B$. If $Q_m$ is an orthogonal projection onto $Y_m$ then 
\begin{align*}
f = \sum_{0}^{\infty} Q_m f, \quad \forall f \in \ell^2(\mathbb{Z}),
\end{align*}
and of course these series converge unconditionally.
But $Q_m f = \Phi_m(f) \phi_m + \Psi_m (f) \psi_m$,
$\Phi_m(g) = \Psi_m(g) = 0$ if $g \perp Y_m$,
and by (\ref{NEWNEWONE})
\begin{align*}
\| \Phi_m \| = \| \Psi_m \| \geq \frac{1}{k(m)} = 2^{m+1} \rightarrow \infty.
\end{align*}
So the system (\ref{NEWONE3}) is not a basis.
\end{proof}

The examples of Proposition \ref{NEWNEWONE3} show that the constant $1$
is sharp in the hypothesis $\|B\| = \rho < 1$ (see Proposition \ref{N1} and 
Theorem \ref{NEWNEWONE2} ); even a non-strict inequality would not be good. However we 
do not have counterexamples where $Bf = b(x)f(x), \quad b \in L^\infty(\mathbb{R}).$

\section{Appendix}
We introduced the weighted spaces $\ell^2(W)$ in Section \ref{1B} formula (\ref{0810_1}) .
In this appendix we will explain or prove statements and inequalities (\ref{0303_1} - \ref{0303_2}) of Section \ref{1H}.
The following sufficient (and necessary) conditions, ``condition $A_2$," for boundedness of $G$ in 
$\ell^2(W)$
in terms of the weight are given in \cite[Thm. 10]{hunt}. 
Recall the convention $w(k):= W(k)^{-1}$.
%HMW lemma
\begin{proposition} \label{4H}
Let $\{ W(k) \}_{k=0}^{\infty}$ be a positive sequence. Then $G: \ell^2(W) \rightarrow \ell^2(W)$ 
is a bounded operator if and only if
\begin{align} \label{5H}
&\sup_{k, n \geq 0 } \sigma^+ (k,n) \, \sigma^-(k,n) = M = M(W) < \infty  \\
\text{where} \quad &\sigma^{+}(k,n) = \frac{1}{1+n} \sum_{k}^{k+n} W(j), \,\,\,\, 
\sigma^{-}(k,n) = \frac{1}{1+n} \sum_{k}^{k+n} w(j). \notag
\end{align}
\end{proposition}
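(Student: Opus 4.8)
The plan is to prove both directions of the equivalence separately: the necessity of condition (\ref{5H}) (the discrete $A_2$ condition) is the routine direction, obtained by testing $G$ on indicator-type sequences, while its sufficiency is the substantive part and amounts to the discrete analogue of the Hunt--Muckenhoupt--Wheeden theorem.

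For necessity, I would assume $G$ is bounded on $\ell^2(W)$ and extract (\ref{5H}) by a test-function argument. Fix a block $E=\{k,k+1,\ldots,k+n\}$ of $N:=n+1$ consecutive indices and set $\xi=w\,\mathbf{1}_E$, i.e. $\xi_j=w(j)$ for $j\in E$ and $\xi_j=0$ otherwise. For indices $m$ lying in an adjacent block $E^\ast$ of comparable length to the right of $E$, every kernel term $1/(j-m)$ with $j\in E$ has the same sign and magnitude $\gtrsim 1/N$, so $|(G\xi)_m|\gtrsim N^{-1}\sum_{j\in E}w(j)=\sigma^-(k,n)$. Since $w(j)^2W(j)=w(j)$, one has $\|\xi\|_W^2=\sum_{j\in E}w(j)=N\sigma^-(k,n)$, while $\|G\xi\|_W^2\gtrsim \sigma^-(k,n)^2\sum_{m\in E^\ast}W(m)$. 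Inserting these into $\|G\xi\|_W\le\|G\|_W\|\xi\|_W$ and rearranging yields $\sigma^-(k,n)\cdot\big(N^{-1}\sum_{E^\ast}W\big)\lesssim\|G\|_W^2$, i.e. the $A_2$ product for the adjacent pair $(E,E^\ast)$, uniformly in $(k,n)$. A standard covering/doubling self-improvement then upgrades this adjacent-pair bound to the same-block statement (\ref{5H}), so $M(W)<\infty$.

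For sufficiency I would run the $A_2$ machinery. First, from (\ref{5H}) deduce a reverse H\"older inequality for $W$ (and $w$) on blocks, so both weights are doubling and lie in $A_\infty$; in particular the weighted discrete Hardy--Littlewood maximal operator is bounded on $\ell^2(W)$. Second, establish a good-$\lambda$ inequality comparing the distribution functions of $G\xi$ and of the maximal function $M\xi$ with respect to the measure $W$: for $\gamma$ small, $W(\{|G\xi|>2\lambda,\ M\xi\le\gamma\lambda\})\le C\gamma\,W(\{|G\xi|>\lambda\})$. This is the Calder\'on--Zygmund argument, using the kernel size $|1/(j-m)|$ together with the approximate cancellation of $\sum_j 1/(j-m)$ over symmetric blocks, and $A_\infty$-doubling to pass from counting measure to $W$-measure. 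Integrating and absorbing gives $\|G\xi\|_W\le C\|M\xi\|_W\le C'\|\xi\|_W$. A quicker alternative is transference: attach to $W$ the piecewise-constant weight on $\mathbb{R}$ equal to $W(j)$ on $[j,j+1)$ and to $\xi$ the step function equal to $\xi_j$ there; condition (\ref{5H}) is comparable to the continuous $A_2$ condition for this weight, the discrete kernel $1/(j-m)$ is comparable to the Hilbert-transform kernel off the diagonal, and the near-diagonal discrepancy contributes only an operator with summable off-diagonal entries, so the continuous Hunt--Muckenhoupt--Wheeden theorem transfers.

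I expect the main obstacle to be the sufficiency direction. On the direct route the delicate point is making the good-$\lambda$ estimate genuinely quantitative in the discrete setting; on the transference route it is verifying that the near-diagonal discrepancy between $1/(j-m)$ and the continuous kernel yields an $\ell^2(W)$-bounded remainder under hypothesis (\ref{5H}) alone, and that the discrete and continuous $A_2$ constants are comparable so that no information is lost in the reduction.
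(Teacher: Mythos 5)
You should know at the outset that the paper contains \emph{no proof} of Proposition \ref{4H}: it is quoted verbatim from Hunt--Muckenhoupt--Wheeden \cite[Thm.~10]{hunt}, and the appendix is explicitly organized to \emph{avoid} relying on it --- Proposition \ref{0310_8}, Corollary \ref{0304_1} and Lemma \ref{0312_3} form a self-contained elementary substitute that establishes (\ref{0303_1})--(\ref{0303_2}) only for the special lacunary step weights actually used in Sections 2--4. So your attempt to prove the full $A_2$ characterization is by construction a different route from the paper's; indeed your transference variant is close in spirit to how the discrete statement is deduced from the continuous one in \cite{hunt}. The trade-off is clear: the paper's appendix is short and elementary but proves only what it needs, while your route delivers the full characterization at the cost of either redoing Calder\'on--Zygmund/good-$\lambda$ theory in the discrete setting or invoking the continuous theorem as a black box.

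As a plan your outline is the standard one and is completable, but two steps, as literally stated, have real gaps. First, in the necessity direction, testing on $\xi = w\,\mathbf{1}_E$ yields only the \emph{adjacent-pair} bounds $\sigma^-(E)\,\sigma^+(E^*) \lesssim \|G\|_W^2$ (for $E^*$ on either side of $E$), and your appeal to a ``standard covering/doubling self-improvement'' is circular, since doubling of $W$ is itself a consequence of the condition being proved. The clean repair is a halving recursion: split a block $J$ of length $2n$ into adjacent halves $I,I'$, expand $\sigma^+(J)\sigma^-(J)$ into the four products, bound the two cross terms by the tested pair condition, and obtain $M_{2n}\le \tfrac12 M_n + C\|G\|_W^2$, which closes because one-point products satisfy $W(k)w(k)=1$; general block lengths follow by enlarging to the nearest dyadic length. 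Second, and more seriously, in the sufficiency-by-transference step your claim that the kernel discrepancy is ``an operator with summable off-diagonal entries'' and hence harmless does not suffice: summability of entries does \emph{not} imply $\ell^2(W)$-boundedness (this is precisely why the paper's Lemma \ref{25H} needs the extra hypothesis (\ref{25HA}), which (\ref{5H}) alone does not give). Quantitatively, (\ref{5H}) permits $W(n)/W(j)$ to grow like $M(1+|n-j|)^2$, so the pointwise comparison of $1/(j-n)$ with the continuous kernel, which gives a discrepancy of size $O(|n-j|^{-2})$, turns after conjugation by $W^{1/2}$ into entries of size $|n-j|^{-1}$, which are not Schur-summable. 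The repair is to compare $(G\xi)_n$ with the cell average $\int_n^{n+1} Hf\,dx$, $f=\sum_j \xi_j\mathbf{1}_{[j,j+1)}$: the symmetric double average $\int_0^1\!\int_0^1 \frac{ds\,dt}{m+s-t}$ agrees with $1/m$ to order $|m|^{-3}$ and vanishes for $m=0$, and cubic off-diagonal decay beats the quadratic weight-ratio growth, so a Schur test closes. With these two repairs, plus the routine check that the piecewise-constant extension of $W$ has continuous $A_2$ constant comparable to $M(W)$, your outline becomes a correct proof.
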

This proposition could be used to prove (\ref{0303_3}-\ref{0303_2}) but 
in this appendix
we will provide a self-contained (elementary) proof of all statements on discrete Hilbert 
transform (or its perturbations) used in Sections 2-4.
%FIND NUMBERS FOR 2-4

Let $T_0 = 0 < T_1 < \ldots$ be a sequence of integers such that
\begin{align} \label{0308_9}
t_k = T_k - T_{k-1}, k=1,2,\ldots
\end{align} 
and 
\begin{align} \label{0308_10}
t_{k+1} \geq R t_k\quad k =1,2,\ldots; R>2.
\end{align}

Define $W: \mathbb{Z}_+ \rightarrow \mathbb{R}_+$ a monotone increasing weight sequence by 
\begin{align} \label{0308_11}
W(j) = 2^k, \quad T_k \leq j < T_{k+1}, \quad k = 0,1,\ldots.
\end{align}

\begin{proposition} \label{0310_8}
Under the assumptions (\ref{0308_9}), (\ref{0308_10}), (\ref{0308_11}) $G$ is a bounded map from 
$\ell^2(W)$  into itself.
\end{proposition}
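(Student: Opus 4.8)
The plan is to remove the weight by a unitary conjugation and then exploit the dyadic block structure of $W$. Writing $U\colon \ell^2(W)\to\ell^2$, $(U\xi)_j=W(j)^{1/2}\xi_j$, the operator $G$ on $\ell^2(W)$ is unitarily equivalent to the matrix $\mathcal{G}$ on $\ell^2$ with entries $\mathcal{G}_{n,m}=(W(n)/W(m))^{1/2}(m-n)^{-1}$ for $n\neq m$ and $\mathcal{G}_{n,n}=0$; it suffices to prove $\|\mathcal{G}\|<\infty$. I set $B_k=\{j:T_k\le j<T_{k+1}\}$, so that $W\equiv 2^k$ on $B_k$ and $|B_k|=t_{k+1}$. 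For $n\in B_i$ and $m\in B_j$ one has $\mathcal{G}_{n,m}=2^{(i-j)/2}(m-n)^{-1}$, so $\mathcal{G}$ is a block operator matrix whose $(i,j)$ block is $M^{(i,j)}=2^{(i-j)/2}H^{(i,j)}$, where $H^{(i,j)}$ is the compression of the discrete Hilbert transform to rows in $B_i$ and columns in $B_j$. I will use the elementary block Schur test: if $b_{ij}:=\|M^{(i,j)}\|$ satisfies $\sup_i\sum_j b_{ij}\le P$ and $\sup_j\sum_i b_{ij}\le Q$, then $\|\mathcal{G}\|\le\sqrt{PQ}$. This follows from the Cauchy--Schwarz reduction $\|\mathcal{G}\xi\|^2\le\sum_i\bigl(\sum_j b_{ij}\,\|\xi^{(j)}\|\bigr)^2$ (with $\xi^{(j)}$ the part of $\xi$ supported on $B_j$) together with the scalar Schur test applied to $(b_{ij})$; one works first with finitely supported $\xi$ to keep all sums finite.

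The diagonal blocks are harmless: $H^{(i,i)}=P_{B_i}GP_{B_i}$ is a compression of $G$, so $b_{ii}=\|H^{(i,i)}\|\le\|G\|=:c_0$, the finite norm of the discrete Hilbert transform recorded after (\ref{2H}). For the off-diagonal blocks with $j>i$ the weight factor already helps: $2^{(i-j)/2}=2^{-(j-i)/2}$, and since $H^{(i,j)}$ is again a compression of $G$ we get $b_{ij}\le c_0\,2^{-(j-i)/2}$, which is geometrically summable in $j-i$. The only delicate region is $i>j$, where the weight factor $2^{(i-j)/2}$ grows; here I beat it using the geometric spreading of the blocks.

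For $i=j+1$ the factor is just $2^{1/2}$ and $\|H^{(i,j)}\|\le c_0$, so $b_{i,i-1}\le 2^{1/2}c_0$. For $i\ge j+2$ the blocks $B_i,B_j$ are separated: every $n\in B_i$, $m\in B_j$ satisfy $n-m\ge T_i-T_{j+1}=t_{j+2}+\cdots+t_i\ge t_i$. Estimating the operator norm by the Hilbert--Schmidt norm,
\[
\|H^{(i,j)}\|^2\le\sum_{m\in B_j}\sum_{n\in B_i}(n-m)^{-2}\le |B_j|\cdot\frac{C}{T_i-T_{j+1}}\le\frac{C\,t_{j+1}}{t_i},
\]
and the lacunarity (\ref{0308_10}) gives $t_i\ge R^{\,i-j-1}t_{j+1}$, whence $\|H^{(i,j)}\|\le C\,R^{-(i-j-1)/2}$. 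Consequently
\[
b_{ij}=2^{(i-j)/2}\|H^{(i,j)}\|\le C\,(2/R)^{(i-j)/2},
\]
which decays geometrically precisely because $R>2$.

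Collecting the three regimes, both $\sum_j b_{ij}$ and $\sum_i b_{ij}$ are bounded uniformly: each is dominated by the diagonal contribution $c_0$ plus two geometric series, one with ratio $2^{-1/2}$ and one with ratio $(2/R)^{1/2}$. The block Schur test then yields $\|\mathcal{G}\|\le\sqrt{PQ}<\infty$, hence $G$ is bounded on $\ell^2(W)$. The heart of the argument, and the main obstacle, is the case of far-apart lower blocks ($i\gg j$): there the weight grows like $2^{(i-j)/2}$ while the block separation forces $\|H^{(i,j)}\|$ to decay like $R^{-(i-j)/2}$, and the hypothesis $R>2$ is exactly the threshold making the product summable, so I expect no slack there.
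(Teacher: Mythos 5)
Your proof is correct. It shares its two core estimates with the paper's proof: both arguments are organized around the same weight-level blocks $[T_k,T_{k+1})$, both control near-diagonal interactions by the $\ell^2$-boundedness of the unweighted transform (\ref{2H}) (your compressions $H^{(i,j)}$ with $|i-j|\le 1$, the paper's term $S_0$ in (\ref{app12}) and (\ref{app15})), and both control far blocks by a Hilbert--Schmidt-type double sum in which the lacunarity (\ref{0308_10}) yields a factor $R^{-p}$ that beats the weight ratio $2^{p}$, with the threshold $R>2$ playing the identical role. What is genuinely different is the glue. The paper stays in $\ell^2(W)$ and splits the squared sum by a weighted Cauchy--Schwarz inequality carrying an auxiliary geometric weight $\widetilde R^{\,p}$, $1<\widetilde R<R/2$ (see (\ref{app9})--(\ref{app10})), then sums the three series $S_0$, $S_-$, $S_+$ separately. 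You instead symmetrize the weight via the unitary conjugation $\mathcal G_{n,m}=(W(n)/W(m))^{1/2}(m-n)^{-1}$ and invoke a block Schur test on the matrix of block norms $(b_{ij})$. Your packaging buys modularity and an explicit constant $\sqrt{PQ}$, eliminates the auxiliary parameter $\widetilde R$, and --- because the symmetrized weight decays in the upper triangle --- the blocks with $j>i$ need no separation argument at all (the factor $2^{-(j-i)/2}$ comes from the weight alone), whereas the paper estimates $S_+$ in (\ref{app14}) by the same separation argument as $S_-$. The paper's version is self-contained (no Schur test needs to be quoted), and its inequalities are arranged so as to extend to the relaxed hypothesis of Remark \ref{0322_1}; your argument adapts to that setting too, at the cost of constants depending on the first few blocks. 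One point worth making explicit in a write-up: the block Schur test requires uniform bounds on \emph{both} row and column sums of $(b_{ij})$, which you get for free since your bounds on $b_{ij}$ depend only on $i-j$ and are summable over $i-j\in\mathbb Z$.
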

\begin{proof}
Notice that (\ref{0308_10}) implies 
\begin{align} \label{0308_12}
t_q/t_p \geq R^{q-p} \quad \text{if} \quad q \geq p
\end{align}
and
\begin{align} \label{0308_13}
T_k \geq t_k \geq R^{k-1} t_1 > R^{k-1}, k\geq 1.
\end{align}
Therefore 
\begin{align} \label{0308_14}
s= \sum_{j=0}^{\infty} W(j) (1+j)^{-2} < \infty.
\end{align}
Indeed, 
\begin{align*}
s&= \sum_{k=0}^{\infty} 2^{k} \sum_{j=T_k}^{T_{k+1}-1} \frac{1}{(1+j)^2} < 
 t_1 + \sum_{k=1}^{\infty} 2^k T_k^{-1}\\ &< t_1 + R \sum_1^{\infty} \left( 2/R \right)^k 
=t_1 + \frac{2R}{R-2}.
\end{align*}

It suffices to prove   
\begin{align} \label{app3}
\| G a \|_W^2 = \sum_{n=0}^{\infty} W(n) \left| \sum_{j=0}^{\infty} \frac{a_j}{j-n} \right|^2 < \infty
\end{align}
and 
\begin{align} \label{app4}
\sum_{n=1}^{\infty} W(n) \left| \sum_{j=1}^{\infty} \frac{a_j}{j-n} \right|^2  \leq C \sum_{k=0}^{\infty} W(k) |a_k|^2 
\end{align}
for some constant $C > 0$ independent of $a$ just for  $a = \{ a_k \}_{k=0}^{\infty}$ which have  finite support, say $a_k = 0$ if $k > K$ and $\sum W(j) |a_j|^2=1$.
For such $a$ (\ref{0308_14}) guarantees that all series in (\ref{app3}) and the left side of (\ref{app4}) 
converge absolutely. Indeed,
if $n \geq 2K$ then
\begin{align*}
\left| \sum_{\substack{j=0\\ j\neq n} } ^{\infty} \frac{a_j}{j-n} \right|^2 &= 
\sum_{j=0}^{K} \frac{1}{(n-j)^2} \leq \frac{4K}{n^2}
\end{align*}
and the left side in (\ref{app4}) 
\begin{align*}
\leq \sum_{0}^{2K} \ldots + 4K \left( \sum_{2K+1}^{\infty} \frac{W(n)}{n^2}\right)
\end{align*}
by (\ref{0308_14}).

We now prove (\ref{app4}).
\begin{align} \label{app7}
\| G a \|_W^2 = \sum_{N=1}^{\infty} \sum_{n \in J_N} W(n) \left|  \sum_{\substack{j \in J(N)\\ j \neq n}} \frac{a_j}{j-n} + \sum_{p=2}^{\infty} 
\sum_{j \in  J_{N \pm p} } \frac{a_j}{j-n}   \right|^2
\end{align}
where 
\begin{align*}
J_N = [T_N, T_{N+1}), N \geq 0,\,\, J_N= \emptyset, \,\,N<0 \quad \text{and} \quad J(N) = J_{N-1} \cup J_N \cup J_{N+1}.
\end{align*}
Let $\widetilde{R}$ be a constant satisfying 
\begin{align} \label{app8}
1< \widetilde{R} < R/2
\end{align}
(for example $\widetilde{R} = 1/2 + R/4$ satisfies (\ref{app8}) in since $R>2$ in (\ref{0308_10})).
Then by Cauchy's inequality:
\begin{align} \label{app9}
 \left| \sum_{\substack{j \in J(N)\\ j\neq n}} \frac{a_j}{j-n} + \sum_{p=2}^{\infty} \sum_{j \in J_{N \pm p}} \frac{a_j}{j-n} \right|^2
  \leq 
  \left( \frac{1}{\widetilde{R}-1} \right) \left[ \widetilde{R} \left| \sum_{\substack{j \in J(N)\\ j\neq n}} \frac{a_j}{j-n}\right|^2
  	+ \sum_{p=2}^{\infty} \widetilde{R}^p \left| \sum_{j \in J_{N \pm p}} \frac{a_j}{j-n}\right|^2 \right]
\end{align}
which is, by another application of Cauchy's inequality
\begin{align} \label{app10}
\leq  \left( 
\frac{1}{\widetilde{R}-1} 
\right) 
\left[ 
\widetilde{R} 
\left| 
\sum_{\substack{j \in J(N)\\ j\neq n}} \frac{a_j}{j-n} 
\right|^2
+ \sum_{p=2}^{\infty}\widetilde{R}^p 
\left( 
\sum_{k \in J_{N \pm p}} W(k) |a_k|^2
\right) 
\left( \sum_{j \in J_{N \pm p}} w(j) (j-n)^{-2} 
\right) 
\right].
\end{align}
Combining (\ref{app7}) with (\ref{app9}-\ref{app10}) we have 
\begin{align} \label{app11}
 \| G a \|_W^2 \leq S_0+ S_- + S_+
 \end{align}
where
\begin{align} \label{app12}
S_0&=\sum_{N=1}^{\infty} \sum_{\substack{n \in J_N \\ j\neq n}} W(n) \widetilde{R} \left| \sum_{j \in J(N)} \frac{a_j}{j-n} \right|^2
\\
\label{app13}
S_-&=\sum_{N=1}^{\infty} \sum_{p=2}^{\infty} 
\left( \sum_{k \in J_{N \pm p}} W(k) |a_k|^2 \right)
\widetilde{R}^p \sum_{n \in J_N}
W(n)
\left( \sum_{j \in J_{N - p}} w(j) (j-n)^{-2} \right)
\\
\label{app14}
S_+&=\sum_{N=1}^{\infty} \sum_{p=2}^{\infty} 
\left( \sum_{k \in J_{N \pm p}} W(k) |a_k|^2 \right)
\widetilde{R}^p \sum_{n \in J_N}
W(n)
\left( \sum_{j \in J_{N + p}} w(j) (j-n)^{-2} \right)
\end{align}

To bound $S_0 \in (\ref{app12})$ we use the fact that the cannonical Hilbert transform $G: \ell^2 \rightarrow \ell^2$ 
is bounded and all projections $I(F) \in (\ref{0310_3})$ are of the norm $1$  (in any $\ell^2(W)$).
\begin{align} \label{app15}
\sum_{N=1}^{\infty} &\sum_{n \in J_N} W(n)  \left| \sum_{\substack{j \in J(N) \\ j\neq n}} \frac{a_j}{j-n} \right|^2 = 
\sum_{N=1}^{\infty} \sum_{n \in J_N} 2^N  \left| G( I(J_N) a ) (n) \right|^2 \\ 
\notag &\leq \sum_{N=1}^{\infty} 2^N  \| G | \ell^2 \|^2 \sum_{j \in J_N}  |a_j|^2 =  \| G | \ell^2 \|^2\sum_{n=1}^{\infty}W(n) 
  |a_n |^2.
\end{align}
To bound $S_{-} \in (\ref{app13})$ 
\begin{align}
\sum_{N=1}^{\infty} &\sum_{p=2}^{\infty} 
\left( \sum_{k \in J_{N \pm p}} W(k) |a_k|^2 \right)
\widetilde{R}^p \sum_{n \in J_N}
W(n)
\left( \sum_{j \in J_{N - p}} w(j) (j-n)^{-2} \right)
\\
\notag &\leq 
\sum_{N=1}^{\infty} \sum_{p=2}^{\infty} 
\left( \sum_{k \in J_{N \pm p}} W(k)|a_k|^2 \right)
\widetilde{R}^p \sum_{n \in J_N}
\left( \sum_{j \in J_{N - p}}  2^{p} (j-n)^{-2} \right)
\\
\notag &\leq 
\sum_{N=1}^{\infty} \sum_{p=2}^{\infty} 
\left( \sum_{k \in J_{N \pm p}} W(k) |a_k|^2 \right)
\widetilde{R}^p 2^p \frac{1}{2} R^{-p}\\  %\quad \text{by (\ref{app6})}\\
\notag &=\sum_{N=1}^{\infty} \sum_{p=2}^{\infty} \frac{1}{2} \left( 2 \widetilde{R}/R\right)^p \left( \sum_{k \in J_{N \pm p}} W(k)|a_k|^2\right)
\end{align}
with $(2 \widetilde{R}/R) < 1$ - see (\ref{app8}).
\\
Hence, summing over $p$ and $N$ yields
\begin{align} \label{app16}
\sum_{N=1}^{\infty} &\sum_{p=2}^{\infty} 
\left( \sum_{k \in J_{N \pm p}} W(k) |a_k|^2 \right)
\widetilde{R}^p \sum_{n \in J_N}
W(n)
\left( \sum_{j \in J_{N - p}} w(j) (j-n)^{-2} \right)
\\
\notag &\leq \left( \frac{2 \widetilde{R}}{R} \right)^2 \left( \frac{1}{1-2\widetilde{R}/R} \right) \sum_{k=1}^{\infty} W(k) |a_k|^2.
\end{align}
A similar argument applied to $S_+$ yields: 
\begin{align} \label{app17}
\sum_{N=1}^{\infty} &\sum_{p=2}^{\infty} 
\left( \sum_{k \in J_{N \pm p}} W(k) |a_k|^2 \right)
\widetilde{R}^p \sum_{n \in J_N}
W(n)
\left( \sum_{j \in J_{N + p}} w(j) (j-n)^{-2} \right)
\\
\notag &\leq \left( \frac{2 \widetilde{R}}{R} \right)^2 \left( \frac{1}{1-2\widetilde{R}/R} \right) \sum_{k=1}^{\infty} W(k) |a_k|^2.
\end{align}
So combining (\ref{app11}) with (\ref{app15}), (\ref{app16}), and (\ref{app17}) we have
\begin{align}
\|G a \|_W^2 \leq  \left[ \widetilde{R} \| G | \ell^2 \|^2 + 
2 \left( 2 \widetilde{R}/R\right)^2 \left(\frac{1}{1-2\widetilde{R}/R} \right)\right] \| a \|_W^2
\end{align}
\end{proof}
\begin{remark} \label{0322_1}
We can ease the hypothesis (\ref{0308_10})  and assume a weaker condition
\begin{align} \label{0312_2}
\lim \inf t_{k+1}/t_k = R>2.
\end{align}
The proof could be adjusted; we omit the details.
\end{remark}
\begin{corollary}\label{0304_1}
(a) \quad Inequality (\ref{0303_1}) holds. \\
(b) \quad Given any weight sequence $\psi(k) \rightarrow \infty$ there exists another weight sequence $W$ which satisfies 
(\ref{0303_1}) and (\ref{0303_2}).
\end{corollary}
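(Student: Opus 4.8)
The plan is to deduce both parts from Proposition \ref{0310_8} together with the elementary principle that boundedness of $G$ is invariant under replacing the weight by a comparable one: if $c\,W_1(k)\le W_2(k)\le C\,W_1(k)$ for all $k$, then $\ell^2(W_1)$ and $\ell^2(W_2)$ carry equivalent norms, so $G$ is bounded on one precisely when it is bounded on the other. Hence it suffices, in each case, to manufacture a \emph{dyadic step weight} of the form (\ref{0308_11}) — governed by integer thresholds $T_k$ whose block lengths $t_k=T_k-T_{k-1}$ grow geometrically, $t_{k+1}\ge R\,t_k$ with a fixed $R>2$ as in (\ref{0308_10}) — and then to quote Proposition \ref{0310_8}. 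The recursion $t_{k+1}=\lceil R\,t_k\rceil$ keeps the $t_k$ integral while guaranteeing both $t_{k+1}\ge R\,t_k$ and $t_k\asymp R^{k}$ (hence $T_k\asymp R^{k}$), so the hypotheses of Proposition \ref{0310_8} are met.

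For part (a), fix $0\le\alpha<1$; the value $\alpha=0$ is the classical boundedness of $G$ on $\ell^2$. I would set $R=2^{1/\alpha}$, which satisfies $R>2$ exactly because $\alpha<1$, and take thresholds as above so that $t_k\asymp R^k$ and $T_k\asymp R^k$. The weight $W$ from (\ref{0308_11}) then equals $2^k$ on the block $[T_k,T_{k+1})$, which sits inside an interval comparable to $[R^{k},R^{k+1})$; on that block $(j+1)^\alpha$ ranges between constant multiples of $R^{k\alpha}=2^{k}$ and $R^{(k+1)\alpha}=2^{k+1}$, so $W(j)\asymp(j+1)^\alpha$ with comparison constants independent of $k$ and $j$. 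Proposition \ref{0310_8} gives boundedness of $G$ on $\ell^2(W)$, and the comparison principle transfers it to $\ell^2((k+1)^\alpha)$, which is exactly (\ref{0303_1}). Negative exponents $-1<\alpha<0$ can be appended by duality: a direct computation with the bilinear pairing $\langle\xi,\eta\rangle=\sum_k\xi_k\eta_k$ gives $G^{t}=-G$, and since $\ell^2(W)^{*}=\ell^2(w)$ under this pairing, $G$ is bounded on $\ell^2((k+1)^\alpha)$ iff it is bounded on $\ell^2((k+1)^{-\alpha})$.

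For part (b), I would build the required weight directly as a dyadic step weight lying under $\psi$. Since $\psi(k)\to\infty$ with $\psi>0$, we have $c_0:=\inf_k\psi(k)>0$, and for each $k$ there is a threshold beyond which $\psi(j)\ge c_0\,2^{k}$. Fixing any $R>2$, I would choose the $T_k$ inductively so as to meet simultaneously the geometric growth $t_{k+1}\ge R\,t_k$ and the inequality $\psi(j)\ge c_0\,2^{k}$ for all $j\ge T_k$; each step only requires $T_{k+1}$ to be taken large enough, so the induction closes. Setting $W_\psi(j)=c_0\,2^{k}$ on $[T_k,T_{k+1})$ yields $W_\psi\uparrow\infty$ and $W_\psi\le\psi$ (so that $\ell^2(\psi)\subset\ell^2(W_\psi)$), which is (\ref{0303_3}); and since the conclusion of Proposition \ref{0310_8} is insensitive to the overall positive factor $c_0$, it delivers the boundedness (\ref{0303_2}) of $G$ on $\ell^2(W_\psi)$.

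The inductive threshold construction and the scale-invariance observation are routine. The one genuinely delicate point, and the heart of (a), is the calibration $R=2^{1/\alpha}$ together with the verification that the resulting step weight is comparable to $(k+1)^\alpha$ \emph{with constants uniform in the block index}; this is precisely what forces $\alpha<1$ (equivalently $R>2$, the exact hypothesis of Proposition \ref{0310_8}), and it is where the integer rounding of the $t_k$ must be checked so that $t_{k+1}\ge R\,t_k$ survives. Everything else is bookkeeping once the weight has been matched to the dyadic model.
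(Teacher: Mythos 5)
Your proposal is correct and takes essentially the same route as the paper: in both parts it reduces to the dyadic step weight model (\ref{0308_11}) with geometrically growing blocks and then quotes Proposition \ref{0310_8}, using equivalence of comparable weights in (a) and an inductive choice of thresholds lying under $\psi$ in (b). The only substantive differences are refinements: your recursion $t_{k+1}=\lceil R\,t_k\rceil$ makes (\ref{0308_10}) hold exactly for every $k$, so you avoid Remark \ref{0322_1} (which the paper needs because its explicit choice $T_k=[2^{k/\alpha}]+k$ satisfies the ratio condition only asymptotically), and your duality argument $G^{t}=-G$ handles $-1<\alpha<0$, a range the paper's proof of (a) (which starts from ``for any $\alpha>0$'') does not address.
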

\begin{proof}
(a) \quad For any $\alpha > 0$ define 
\begin{align} \label{0310_4}
T_0 = 0, \quad T_k = \left[ 2^{k/\alpha}\right] + k ,\quad k=1,2,\ldots
\end{align}
and 
\begin{align} \label{0310_5}
U(j) = 2^k, \quad T_k \leq j < T_{k+1}.
\end{align}
Then with $W(n) = (n+1)^{\alpha}, \quad n \in \mathbb{Z}_+$ the two sequences $U$ and $W$ are equivalent, i.e., for 
some constants $0<c(\alpha) , C(\alpha)$
\begin{align*} 
0 < c(\alpha) \leq U(j)w(j) \leq C(\alpha) < \infty, \forall j
\end{align*}
so $\ell^2(U) = \ell^2(W)$ and the norms (\ref{0810_1}) with weights $U$ and $W$ are equivalent.
With (\ref{0310_4}) for $t_k = T_k - T_{k-1}$ we have :
\begin{align} \label{0310_7}
\lim_{k \rightarrow \infty} t_{k+1}/t_k = 2^{1/\alpha}.
\end{align}
If $\alpha <1$, (\ref{0310_7}) implies that 
\begin{align*}
t_{k+1} \geq R t_k \quad \text{for} \quad k \geq K_\alpha, \quad R = \frac{1}{2} (2+2^{1/\alpha})>2.
\end{align*}
Now Proposition \ref{0310_8} and Remark \ref{0322_1} formula (\ref{0312_2}) imply (\ref{0303_1}).
\\
(b) \quad Choose $R>2$. Define 
\begin{align*}
T_0&=0, \quad T_2 = \min \{ t: \psi(t) \geq 2, t \geq 1 \}, \\ 
T_{k+1} &= \min \{ t+ T_k : \psi(t+T_k) \geq 2^{k+1}, \quad t \geq Rt_k \}, \quad k =1,2,\ldots
\end{align*}
Then $W \in (\ref{0308_11})$ is good by Proposition \ref{0310_8}, and (\ref{0303_1}),(\ref{0303_2}) hold.
\end{proof}
To support Lemma \ref{25H} and its use in Sections 2-3 we now prove the following.
\begin{lemma} \label{0312_3}
Under the assumptions (\ref{0308_9}),(\ref{0308_10}),(\ref{0308_11}) the condition (\ref{25HA}) holds, i.e.,  
\begin{align} \label{0310_8_}
\sum_{n=0}^{\infty} \frac{r(n)}{(1+n)^2} < \infty \quad \text{where} \quad r(n) = \sup \left\{ W(i+n)w(i): i\geq -n \right\}.
\end{align}
\end{lemma}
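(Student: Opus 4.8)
The plan is to reduce the claim to the summability already established in (\ref{0308_14}) by proving the pointwise bound $r(n)\leq 2\,W(n)$. Since $W$ and $w=1/W$ are defined only on $\mathbb{Z}_+$, for $n\in\mathbb{Z}_+$ the index set $\{i\geq -n\}$ in (\ref{0310_8_}) reduces to $i\in\mathbb{Z}_+$, so $r(n)=\sup_{i\geq 0}W(i+n)w(i)=\sup_{i\geq 0}W(i+n)/W(i)$. First I would record the level function $\kappa$ attached to (\ref{0308_11}): let $\kappa(j)$ be the unique index with $T_{\kappa(j)}\leq j<T_{\kappa(j)+1}$, so that $W(j)=2^{\kappa(j)}$ and $W(i+n)/W(i)=2^{\kappa(i+n)-\kappa(i)}$. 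Writing $D(n)=\sup_{i\geq 0}\bigl(\kappa(i+n)-\kappa(i)\bigr)$ we have $r(n)=2^{D(n)}$, and everything comes down to the estimate $D(n)\leq\kappa(n)+1$.

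The main step is this level-counting bound, and it is where the geometric growth of the blocks enters. Since $R>2>1$, hypothesis (\ref{0308_10}) gives $t_{\ell+1}\geq R\,t_\ell>t_\ell$, so $(t_\ell)$ is strictly increasing and hence $t_{m+1+j}\geq t_{1+j}$ for all $m\geq 0$ and $j\geq 1$. Now fix $n\geq 1$ and an arbitrary $i\geq 0$, and put $m=\kappa(i)$, $M=\kappa(i+n)$; I must show $M-m\leq\kappa(n)+1$, and may assume $M\geq m+1$. From $i\leq T_{m+1}-1$ together with $T_M\leq i+n$ I obtain $n\geq T_M-T_{m+1}+1$, and the termwise comparison above gives
\begin{align*}
T_M-T_{m+1}=\sum_{\ell=m+2}^{M}t_\ell\geq\sum_{\ell=2}^{M-m}t_\ell=T_{M-m}-t_1\geq T_{M-m}-t_{M-m}=T_{M-m-1}
\end{align*}
(empty sums being zero). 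Hence $n\geq T_{M-m-1}+1>T_{M-m-1}$, which forces $\kappa(n)\geq M-m-1$, i.e. $M-m\leq\kappa(n)+1$. Taking the supremum over $i$ yields $D(n)\leq\kappa(n)+1$ and therefore $r(n)=2^{D(n)}\leq 2\cdot 2^{\kappa(n)}=2\,W(n)$ for every $n$ (the case $n=0$ being trivial, as $r(0)=1$).

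Finally I would combine this with (\ref{0308_14}): from $r(n)\leq 2W(n)$,
\begin{align*}
\sum_{n=0}^{\infty}\frac{r(n)}{(1+n)^2}\leq 2\sum_{n=0}^{\infty}\frac{W(n)}{(1+n)^2}=2s<\infty,
\end{align*}
where $s$ is the finite quantity from the proof of Proposition \ref{0310_8}. The only real obstacle I anticipate is the inequality $D(n)\leq\kappa(n)+1$: the tempting guess $r(n)=W(n)$ is in general false — it already fails when $t_1\geq 2$, where $r(1)=2>1=W(1)$ — so one must carry the extra unit, and the whole estimate rests on isolating the monotonicity $t_{m+1+j}\geq t_{1+j}$ of the block lengths. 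Everything else is routine bookkeeping.
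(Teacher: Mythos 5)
Your proof is correct, and while its block-index bookkeeping parallels the paper's, it closes the argument by a genuinely different route. The paper's own proof starts the same way — fix $i,n$, let $p,q$ be the block indices of $i$ and $i+n$, observe $w(i)W(i+n)=2^{q-p}\le 2$ when $q\le p+1$, and otherwise bound $n$ below by $T_q-T_{p+1}$ — but it then invokes the geometric growth (\ref{0308_10}), (\ref{0308_12}) to get $n\ge \beta^{-1}R^{q-p}$ with $\beta=R^{2}/t_{2}$, hence the polynomial bound $r(n)\le 2+(\beta n)^{\gamma}$ with $\gamma=\log 2/\log R<1$, and sums $n^{\gamma-2}$ directly. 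You instead prove the multiplicative bound $r(n)\le 2W(n)$, whose key comparison $T_M-T_{m+1}\ge T_{M-m-1}$ uses only the monotonicity $t_{\ell+1}\ge t_{\ell}$ of the block lengths (i.e.\ $R\ge 1$ would suffice), and then quote the summability $s=\sum_{n} W(n)(1+n)^{-2}<\infty$ from (\ref{0308_14}) — which is where $R>2$ actually enters, and which was established under exactly the hypotheses of the lemma. The two final bounds are quantitatively comparable (by (\ref{0308_13}) one has $\kappa(n)\le 1+\log n/\log R$, so $2W(n)\le 4n^{\gamma}$), but your version is more modular: it cleanly separates the structural statement ``condition (\ref{25HA}) holds whenever the block lengths are non-decreasing and $\sum_n W(n)(1+n)^{-2}<\infty$'' from the geometric estimate already done in Proposition \ref{0310_8}, rather than redoing a geometric-series argument inside the lemma. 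Your handling of the edge cases ($M\le m+1$, $n=0$), the finiteness of $D(n)$, and the remark that the factor $2$ in $r(n)\le 2W(n)$ cannot be dropped (e.g.\ $r(1)=2>W(1)$ when $t_1\ge 2$) are all accurate.
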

\begin{proof}
With given $k, n \geq 0$ define $p,q$ by $T_p \leq k < T_{p+1}$, $T_q \leq k+n < T_{q+1}$.
Then $q \geq p \geq 0$; if $q \leq p+1$ then $w(k)W(k+n) = 2^{-p} \cdot 2^{q} \leq 2.$.
\\
If $q \geq p+2$ by (\ref{0308_10}) and (\ref{0308_12})
\begin{align*}
n &> T_q - T_{p+1} = \sum_{p+2}^q t_k  \geq t_{p+2} \sum_{0}^{q-p-2} R^{j}\\
  &\geq t_2 \cdot \frac{R^{q-p-1}-1}{R-1} \geq \frac{1}{\beta} R^{q-p}
\end{align*}
where $\beta = R^2/t_2.$
\\
Therefore, with $\gamma = \log 2 / \log R <1$
\begin{align*} 
2^{q-p} = (R^{q-p})^{\gamma} \leq (\beta n)^{\gamma}
\end{align*}
and for any $k,n$, $w(k)W(k+n) = 2^{q-p} \leq 2+(\beta n)^{\gamma}$.
So (\ref{0310_8_}) holds.
\end{proof}
\begin{acknowledgements}
We are grateful to Petr Siegl; he brought our attention to a series of questions 
on PT-operators. In the course of preparation of this paper we enjoyed discussions and advice of our 
colleagues M. Agranovich, V. Katsnelson, M. Lacy,  P. Nevai, B. Pavlov, E. Shargorodsky, A. Shkalikov, R. Stanton, A. Volberg, M. Znojil;
many thanks to them.
\end{acknowledgements}
%%%%%%%%%%%%%%%%%%%%%%%%%%%%%%%%%%%%%%%%%%%%%%%%%%%%%%%%%%%%%%%%%%%%%%
%BIBLIOGRAPHY
%%%%%%%%%%%%%%%%%%%%%%%%%%%%%%%%%%%%%%%%%%%%%%%%%%%%%%%%%%%%%%%%%%%%%%

\end{document}